\smartqed \usepackage{amsmath, amssymb, latexsym,dsfont}
\newtheorem{thr}{Theorem}[section]
\newtheorem{co}[thr]{Corollary}
\newtheorem{lm}[thr]{Lemma}
\newtheorem{pr}[thr]{Proposition}
\newtheorem{ex}[thr]{Example}
\newtheorem{defn}[thr]{Definition}
\newtheorem{rem}[thr]{Remark}
\begin{document}
\title{Approximating Properties of Metric and Generalized Metric Projections in Uniformly Convex and Uniformly Smooth Banach Spaces}
\titlerunning{Approximating Properties of Metric and Generalized Metric Projections}        
\author{Akhtar A. Khan, Jinlu Li}
\institute{ Akhtar Khan (Corresponding author)  \at
              School of Mathematical Sciences, Rochester Institute of Technology, Rochester, New York, 14623, USA. \email{aaksma@rit.edu}
                           \and 
             Jinlu Li \at
              Department of Mathematics, Shawnee State University, Portsmouth, Ohio 45662, USA.  \email{jli@shawnee.edu)}
{\large}}
\date{Received: date / Accepted: date}

\maketitle

\begin{abstract}
This note conducts a comparative study of some approximating properties of the metric projection, generalized projection, and generalized metric projection in uniformly convex and uniformly smooth Banach spaces. We prove that the inverse images of the metric projections are closed and convex cones, but they are not necessarily convex. In contrast, inverse images of the generalized projection are closed and convex cones. Furthermore, the inverse images of the generalized metric projection are neither a convex set nor a cone. We also prove that the distance from a point to its projection at a convex set is a weakly lower semicontinuous function for all three notions of projections. We provide illustrating examples to highlight the different behavior of the three projections in Banach spaces. 
\keywords{Generalized projection \and  metric projection \and  generalized metric projection\and inverse images..}
\subclass{41A10, 41A50, 47A05, 58C06.}
\end{abstract}
\section{Introduction}\label{KL3-S1}
The notion of projection onto closed and convex sets has been extensively explored due to its wide-ranging applications. The projection map is equipped with immensely valuable properties in a Hilbert space, making it an indispensable tool in optimization, approximation theory, inverse problems, variational inequalities, image processing, neural networks, machine learning, and others. On the other hand, many critical applications require that projection maps be studied theoretically and computationally in Banach spaces. An important example is the inverse problem of identifying discontinuous parameters where the regularized problem is formulated in a Banach space. As a consequence, many researchers have studied projections in Banach spaces.   Unfortunately, in this setting, metric projection loses many essential features. For an overview of these details and some of the related developments, see \cite{BalGol12,BalMarTei21,Bau03,BorDruChe17,Bou15,Bro13,BroDeu72,Bui02,Bur21,CheGol59,ChiLi05,Den01,DeuLam80,DutShuTho17,GJKS21,Ind14,FitPhe82,KonLiuLiWu22,KroPin13,Li04,Li04a,LiZhaMa08,Nak22,Osh70,Pen05,PenRat98,QiuWan22,Ric16,Sha16,ShaZha17,ZhaZhoLiu19},  and the cited references.

Motivated by the shortcomings of the metric projection in Banach spaces, generalized projection, and generalized metric projection were proposed and used heavily in a Banach space framework, see \cite{Alb93,Alb96}. Although there are exceptions (see \cite{Li04a,Li05}), the two notions are mainly studied in Banach spaces with favorable topological structures, such as uniformly convex and uniformly smooth Banach spaces. The basic properties of the generalized projection and the generalized metric projection and their connections are largely unknown in general Banach spaces. Inspired by this, we recently focused on studying generalized projection and generalized metric projection in the framework of Banach spaces. In \cite{KhaLiRwi22} attempts were made to understand the similarities and differences in the three notions of projections in Banach spaces: the metric projection, the generalized projection, and the generalized metric projection. A comparative study of these projection should help shed some light on their utility and their strengths and weaknesses in various applications.

The main motivation of this research is to strengthen further the understanding of the relationship between the three notions of projections. Surprisingly, it turns out that all three projections exhibit different behaviors in computing inverse images. These results are related to a well-known result given in a Hilbert space setting by Zarantonello~\cite[Lemma~1.5]{Zar71}, see also \cite[Theorem 3.1.3]{GJKS21}. We present several illustrating examples involving the computations of the duality map in finite-dimensional Banach spaces.

The contents of this paper are organized into three sections. After a brief introduction in Section~1, we recall various notions of projections and give new results concerning normalized duality mapping. The main results concerning the approximating properties of various projections are given in Section~3.
\section{Preliminaries}\label{KL3-S2-P}
\subsection{The Normalized Duality Map and new characterizations}
Let $X$ be a real Banach space with norm $\|\cdot\|_X$, let $X^*$ be the topological dual of $X$ with norm $\|\cdot\|_{X^*}$, and let $\langle \cdot,\cdot\rangle$ be the duality pairing between $X^*$ and $X$. Let $B_X$ and $B_{X^*}$ be the closed unit balls in $X$ and $X^*$, respectively. For details on the notions recalled in this section, see \cite{Tak00}.

Given a uniformly convex and uniformly smooth Banach space $X$ with dual space $X^*$, the normalized duality map $J:X\to X^*$ is a single-valued mapping defined by
$$\langle Jx,x\rangle=\|Jx\|_{\text{\tiny{\emph{X}}}^*}\|x\|_{\text{\tiny{\emph{X}}}}=\|x\|_{\text{\tiny{\emph{X}}}}^2=\|Jx\|_{\text{\tiny{\emph{X}}}^*}^2,\quad \text{for any}\ x\in X.$$

We recall that the modulus of smoothness of the Banach space $X$, denoted by $\rho_X$, is defined by
$$\rho_X(t)=\sup\left\{\frac{\|x+y\|_X+\|x-y\|_X}{2}-1:\ x,y\in X,\ \|x\|_X=1,\ \|y\|_X=t\right\},\quad \text{for}\ t>0.$$
The modulus of convexity of $X$ is the function $\delta_X:[0,2]\to [0,1]$ defined by
$$\delta_X(\varepsilon)=\inf\left\{1-\left\|\frac{x+y}{2}\right\|:\ x,y\in B_X,\ \|x-y\|_X\geq \varepsilon\right\},\ \text{for any}\ \varepsilon \in [0,2].$$

\begin{lm}\label{KL3-S2.2-L2.1} Let $X$ be a uniformly convex and uniformly smooth Banach space, and let $X^*$ be the dual of $X$. Then, the normalized map $J_X:X\to X^*$ has the following properties:
\begin{description}
\item[($J_1$)] $J_X:X\to X^*$ is one-to-one, onto, continuous and homogeneous.
\item[($J_2$)] $J_X$ is uniformly continuous on each bounded subset of $X$.
\item[($J_3$)] For any $x,y\in X$, let $R=\max\{\|x\|_X,\|y\|_X\}$ and let $\Gamma_X$ be the Figiel's constant of $X$. Then,
\begin{align*}
\langle J_X x- J_X y,x-y\rangle&\geq \frac{R^2}{2\Gamma_X} \delta_X\left(\frac{\|x-y\|_X}{2R}\right),\\
\|J_X x- J_X y\|_{X^*}&\leq \frac{R^2}{2\Gamma_X\|x-y\|_X}\sigma_X\left(\frac{16\Gamma_X\|x- y\|_X}{R}\right).
\end{align*}
\end{description}
\end{lm}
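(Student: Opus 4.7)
The plan is to establish each of the three parts by reducing to classical facts from the geometry of Banach spaces, since $J_X$ behaves as the Fr\'echet derivative of $\tfrac{1}{2}\|\cdot\|_X^2$ and its regularity properties mirror those of the norm on $X$ and on $X^*$.

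For part $(J_1)$, I would first verify homogeneity by a direct computation: for any scalar $\lambda$, the candidate functional $\lambda J_X x$ satisfies $\langle \lambda J_X x,\lambda x\rangle=\lambda^2\|x\|_X^2=\|\lambda x\|_X^2$ and $\|\lambda J_X x\|_{X^*}=|\lambda|\,\|x\|_X=\|\lambda x\|_X$, so the uniqueness of the normalized duality functional (granted because uniform smoothness of $X$ makes the norm Fr\'echet differentiable off the origin) forces $J_X(\lambda x)=\lambda J_X x$. Single-valuedness and continuity of $J_X$ then follow from that differentiability. Finally, uniform convexity of $X$ gives reflexivity via Milman--Pettis, while uniform smoothness of $X$ makes $X^*$ uniformly convex, so the map $J_{X^*}\colon X^*\to X^{**}\equiv X$ is well-defined, continuous and is precisely the inverse of $J_X$, yielding the bijection claim.

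For part $(J_2)$, the strategy is to invoke the standard equivalence that uniform smoothness of $X$, namely $\lim_{t\to 0^+}\rho_X(t)/t=0$, is the same as uniform Fr\'echet differentiability of $\|\cdot\|_X$ on the unit sphere, and that this uniform differentiability transfers to uniform continuity of the derivative map $J_X$ on every bounded subset of $X$. This is a textbook statement that can be cited directly from \cite{Tak00} without reproducing the argument.

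For part $(J_3)$, the plan is to appeal to Figiel's classical quantitative inequalities for the normalized duality map: the first bound is the strong-monotonicity estimate in terms of $\delta_X$, and the second is the Lipschitz-type modulus-of-continuity estimate in terms of the modulus of smoothness (encoded here through $\sigma_X$). The main obstacle I anticipate is bookkeeping the exact constants: the Figiel constant $\Gamma_X$, the factors $\tfrac{1}{2}$ and $16$, and the precise arguments at which $\delta_X$ and $\sigma_X$ are evaluated depend on a specific normalization convention, so the substantive task is to locate a reference whose normalizations agree with those fixed above and to quote the statement faithfully. Beyond this, no creative step is required, because the lemma packages well-known duality-map facts rather than a new result.
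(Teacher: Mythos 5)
The paper gives no proof of this lemma at all: it is stated as a recalled preliminary fact, with the reader pointed to the literature (e.g., \cite{Tak00}) for the standard theory of the normalized duality map. Your proposal --- homogeneity by uniqueness of the duality functional, bijectivity via Milman--Pettis and the uniform convexity of $X^*$, uniform continuity on bounded sets from uniform smoothness, and Figiel's quantitative inequalities for $(J_3)$ --- is the correct and standard derivation of exactly these facts, so it is consistent with (indeed, fills in) the paper's treatment.
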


The following example will be repeatedly used in this work.
\begin{ex}\label{KL3-S2.2-Ex2.2} Let $X=\mathds{R}^3$ be equipped with the $3$-norm $\|\cdot\|_3$ defined for any $z=(z_1,z_2,z_3)\in X,$ by
$$\|z\|_3=\sqrt[3]{|z_1|^2+|z_2|^3+|z_3|^3}.$$ Then $(X,\|\cdot\|_3)$ is a uniformly convex and uniformly smooth Banach space (and is not a Hilbert space). The dual space of $(X,\|\cdot\|_3)$ is $(X^*,\|\cdot\|_{\frac{3}{2}})$ so  that for any $\psi=(\psi_1,\psi_2,\psi_2)^*$, we have
$$\|\psi\|_{\frac32}=\left(|\psi_1|^{\frac{3}{2}}+|\psi_2|^{\frac{3}{2}}+|\psi_3|^{\frac{3}{2}}\right)^{\frac23}.$$

The normalized duality mappings $J$ and $J^*$ satisfy the following conditions. For any $z=(z_1,z_2,z_3)\in X$ with $z\ne 0$, we have
\begin{equation}\label{KL3-S2.2-Ex2.2-E2.1}
Jz=\left(\frac{|z_1|^2\text{sign}(z_1)}{\|z\|_3},\frac{|z_2|^2\text{sign}(z_2)}{\|z\|_3},\frac{|z_3|^2\text{sign}(z_3)}{\|z\|_3}\right).
\end{equation}
Moreover, for any $\psi=(\psi_1,\psi_2,\psi_3)\in X^*$ with $\psi\ne 0$, we have
\begin{equation}\label{KL3-S2.2-Ex2.2-E2.2}
J^*\psi =\left( \frac{|\psi_1|^{\frac32 -1}\text{sign}(\psi_1)}{\left(\|\psi\|_{\frac32}\right)^{\frac32-2}}, \frac{|\psi_2|^{\frac32 -1}\text{sign}(\psi_2)}{\left(\|\psi\|_{\frac32}\right)^{\frac32-2}},\frac{|\psi_3|^{\frac32 -1}\text{sign}(\psi_3)}{\left(\|\psi\|_{\frac32}\right)^{\frac32-2}}\right).
\end{equation}
\end{ex}
We have the following new characteristic of the normalized duality map.
\begin{pr}\label{KL3-S2.2-ND-P2.2} Let $X$ be a uniformly convex and uniformly smooth Banach space. Let $\theta \ne y\in X.$ Then, the set
$$\left\{x\in X:\ \langle J_Xx,y\rangle\geq 0 \right\},$$
is a closed cone with vertex at $\theta$ in $X$. However, in general, it is not convex.
\end{pr}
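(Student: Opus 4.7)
The plan is to dispatch the three assertions---closedness, cone property, and non-convexity---separately, with only the last requiring any real work.

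For closedness, I would invoke property $(J_1)$ from Lemma~\ref{KL3-S2.2-L2.1} to say that $J_X$ is continuous from $X$ to $X^*$. The functional $x\mapsto\langle J_Xx,y\rangle$ is then continuous on $X$, so the set in question is exactly the preimage of the closed half-line $[0,\infty)$ under this continuous real-valued map, hence closed.

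For the cone property with vertex $\theta$, I would first note that $\theta$ belongs to the set trivially since $J_X\theta=0$. For any $x\in X$ with $\langle J_Xx,y\rangle\ge 0$ and any scalar $\lambda\ge 0$, the homogeneity stated in $(J_1)$ yields $J_X(\lambda x)=\lambda J_Xx$ (as confirmed by the explicit formula \eqref{KL3-S2.2-Ex2.2-E2.1} in Example~\ref{KL3-S2.2-Ex2.2}), so $\langle J_X(\lambda x),y\rangle=\lambda\langle J_Xx,y\rangle\ge 0$, and $\lambda x$ is again in the set.

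The only substantive part is producing a counterexample to convexity. I would work inside the concrete space of Example~\ref{KL3-S2.2-Ex2.2}, namely $X=\mathds{R}^3$ with the $3$-norm, because formula \eqref{KL3-S2.2-Ex2.2-E2.1} makes the relevant inner product completely explicit: modulo the positive factor $1/\|x\|_3$,
\[
\langle J_Xx,y\rangle \ \text{has the same sign as}\ y_1x_1|x_1|+y_2x_2|x_2|+y_3x_3|x_3|.
\]
A linear choice of $y$ would not help, since in two coordinates the set reduces to a half-plane; I expect the obstruction is that one needs at least three coordinates with mixed signs to defeat convexity. I would take $y=(1,1,-1)$ and the points $x^{(1)}=(1,-\tfrac12,\tfrac12)$ and $x^{(2)}=(-\tfrac12,1,\tfrac12)$; direct arithmetic gives the quadratic expression equal to $\tfrac12>0$ for each, so both are in the set, whereas at the midpoint $\tfrac12(x^{(1)}+x^{(2)})=(\tfrac14,\tfrac14,\tfrac12)$ it equals $\tfrac1{16}+\tfrac1{16}-\tfrac14=-\tfrac18<0$. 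This is the main obstacle: guessing points that cleanly exploit the nonlinearity of $J_X$ in the cross-sign quadrants, and checking that the convex combination flips the sign. Everything else is immediate from the definition and from Lemma~\ref{KL3-S2.2-L2.1}.
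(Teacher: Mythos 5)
Your proposal is correct and follows essentially the same route as the paper: closedness and the cone property from the continuity and homogeneity of $J_X$, and non-convexity via an explicit counterexample in $(\mathds{R}^3,\|\cdot\|_3)$ using formula \eqref{KL3-S2.2-Ex2.2-E2.1}. Your specific points $(1,-\tfrac12,\tfrac12)$, $(-\tfrac12,1,\tfrac12)$ with $y=(1,1,-1)$ differ from the paper's $v=(3,-2,-1)$, $w=(1,-3,2)$, $y=(25,37,77)$, but the arithmetic checks out and the argument is the same.
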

\begin{proof} Since the normalized duality map $Jx$ is continuous and homogeneous, it follows at once that the set $\left\{x\in X:\ \langle J_Xx,y\rangle\geq 0 \right\}$ is a closed cone with vertex at the origin.
We construct a counterexample to show that the set $\left\{x\in X:\ \langle J_Xx,y\rangle\geq 0 \right\}$ is not convex.

Let $X=\mathds{R}^3$ be as given in Example~\ref{KL3-S2.2-Ex2.2} We take $v=(3,-2,-1)$, $w=(1,-3,2)$, and $y=(25,37,77)$. Then $\|v\|_3=\|w\|_3$. By Example~\ref{KL3-S2.2-Ex2.2}, we have
\begin{align*}
J_Xv&=\left(\frac{9}{\|v\|_3},\frac{-4}{\|v\|_3},\frac{-1}{\|v\|_3}\right),\\
J_Xw&=\left(\frac{1}{\|w\|_3},\frac{-9}{\|w\|_3},\frac{4}{\|w\|_3}\right),
\end{align*}
which gives
\begin{equation}\label{KL3-S2.2-ND-E2.2}
\langle J_Xv,y\rangle=0,\quad \text{and}\quad\langle J_Xw,y\rangle=0.
\end{equation}
We take a convex combination of $v$ and $w$ by
$$g=\frac23v+\frac13w=\left(\frac73,-\frac73,0\right),$$
which gives $\|g\|_3=\frac73\sqrt[3]{2}$. By Example~\ref{KL3-S2.2-Ex2.2}, we calculate
$J_Xg=\frac{7}{3\sqrt[3]{2}}(1,-1,0),$ which yields
\begin{equation}\label{KL3-S2.2-ND-E2.3}
\langle J_Xg,y\rangle=-14\sqrt[3]{4}<0,
\end{equation}
proving that $g\notin \{x\in X:\, \langle  J_Xx,y\rangle \geq 0\},$ and hence $\left\{x\in X:\ \langle J_Xx,y\rangle\geq 0 \right\}$ is not convex.
\end{proof}

A simple extension of the above result is the following variant.
\begin{pr}\label{KL3-S2.2-ND-P2.3} Let $X$ be a uniformly convex and uniformly smooth Banach space. Let $\theta \ne y\in X$. Then the set
$$\left\{x\in X:\ \langle J_Xx,y\rangle\leq 0 \right\},$$
is a closed cone with vertex at $\theta$ in $X$. However, in general, it is not convex.
\end{pr}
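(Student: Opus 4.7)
The plan is to derive Proposition~\ref{KL3-S2.2-ND-P2.3} as an immediate consequence of the already established Proposition~\ref{KL3-S2.2-ND-P2.2} via the substitution $y\mapsto -y$. Since $y\ne\theta$ implies $-y\ne\theta$, the hypothesis of Proposition~\ref{KL3-S2.2-ND-P2.2} is met with $-y$ in place of $y$, and because the duality pairing is linear in its second argument, we have the identity
$$\left\{x\in X:\ \langle J_Xx,y\rangle\leq 0\right\}=\left\{x\in X:\ \langle J_Xx,-y\rangle\geq 0\right\}.$$
Thus the closed cone property transfers verbatim from Proposition~\ref{KL3-S2.2-ND-P2.2}. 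Alternatively, one can argue directly: closedness follows from the continuity of $J_X$ guaranteed by property ($J_1$) of Lemma~\ref{KL3-S2.2-L2.1}, and the cone property with vertex at $\theta$ follows from the homogeneity of $J_X$ (also from ($J_1$)), since $\langle J_X(\lambda x),y\rangle=\lambda\langle J_Xx,y\rangle$ for $\lambda\geq 0$.

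For the failure of convexity I would recycle the construction from Example~\ref{KL3-S2.2-Ex2.2} that was already used in Proposition~\ref{KL3-S2.2-ND-P2.2}. Taking $X=\mathds{R}^3$ with the $3$-norm, and keeping $v=(3,-2,-1)$, $w=(1,-3,2)$, I would replace $y=(25,37,77)$ by $\widetilde y=-y=(-25,-37,-77)$. The equalities in \eqref{KL3-S2.2-ND-E2.2} give $\langle J_Xv,\widetilde y\rangle=0$ and $\langle J_Xw,\widetilde y\rangle=0$, so both $v$ and $w$ lie in $\{x\in X:\langle J_Xx,\widetilde y\rangle\leq 0\}$; however, the same convex combination $g=\tfrac23 v+\tfrac13 w$ satisfies $\langle J_Xg,\widetilde y\rangle=14\sqrt[3]{4}>0$ by \eqref{KL3-S2.2-ND-E2.3}, and hence $g$ fails to belong to the set.

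The only potential obstacle is a bookkeeping one: ensuring that the sign change does not disrupt the cone/closedness argument, which it does not because multiplying both sides of the defining inequality by $-1$ preserves the structural properties used (continuity and homogeneity of $J_X$). There is no analytic difficulty beyond what was already carried out for Proposition~\ref{KL3-S2.2-ND-P2.2}.
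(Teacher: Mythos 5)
Your proposal is correct and matches the paper's intent: the paper gives no separate proof, stating only that this is ``a simple extension'' of Proposition~\ref{KL3-S2.2-ND-P2.2}, and your substitution $y\mapsto -y$ (with the resulting sign flips $\langle J_Xv,-y\rangle=\langle J_Xw,-y\rangle=0$ and $\langle J_Xg,-y\rangle=14\sqrt[3]{4}>0$) is exactly the extension intended. The cone and closedness arguments transfer as you describe, so nothing further is needed.
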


By similar arguments used in the proof of Proposition~\ref{KL3-S2.2-ND-P2.2}, we prove the following result.
\begin{pr}\label{KL3-S2.2-ND-P2.4} Let $X$ be a uniformly convex and uniformly smooth Banach space. For $\theta \ne y\in X$ and $\theta \ne \psi\in X^*$, the set
$$\left\{x\in X:\ \langle J_Xx-\psi,y\rangle\geq 0 \right\},$$
is closed. However, in general, it is not convex.
\end{pr}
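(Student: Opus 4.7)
The plan is to split the proof into the two assertions and handle each in turn, closely following the pattern of Proposition~\ref{KL3-S2.2-ND-P2.2}.

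For the closedness claim, I would simply invoke property $(J_1)$ of Lemma~\ref{KL3-S2.2-L2.1}: since $J_X:X\to X^*$ is continuous and the duality pairing is continuous, the scalar map $\Phi(x):=\langle J_Xx-\psi,y\rangle$ is continuous on $X$. The set in question is $\Phi^{-1}([0,\infty))$, the preimage of a closed half-line under a continuous map, and hence is closed. This step is immediate and requires no further ingredient.

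For the non-convexity part, my strategy is to reduce directly to the counterexample already produced in the proof of Proposition~\ref{KL3-S2.2-ND-P2.2} rather than to build a new one from scratch. Because $y\ne\theta$, the evaluation functional $\psi\mapsto\langle\psi,y\rangle$ is a nonzero continuous linear form on $X^*$; its kernel is therefore a closed hyperplane in $X^*$ containing nonzero elements whenever $\dim X\ge 2$. Picking any such nonzero $\psi$ with $\langle\psi,y\rangle=0$ forces $\langle J_Xx-\psi,y\rangle=\langle J_Xx,y\rangle$, so the set under study literally coincides with the one treated in Proposition~\ref{KL3-S2.2-ND-P2.2}, whose non-convexity has already been established.

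Concretely, I would work in the space $X=\mathds{R}^3$ of Example~\ref{KL3-S2.2-Ex2.2} with $y=(25,37,77)$ and choose, for instance, $\psi=(37,-25,0)\in X^*$, which satisfies $\psi\ne\theta$ and annihilates $y$. The triple $v=(3,-2,-1)$, $w=(1,-3,2)$, $g=\tfrac{2}{3}v+\tfrac{1}{3}w$, together with the computations recorded in \eqref{KL3-S2.2-ND-E2.2}--\eqref{KL3-S2.2-ND-E2.3}, then directly witness the non-convexity of the present set. The main (and rather mild) obstacle is only to exhibit a nonzero annihilator $\psi$ of $y$ in $X^*$; once that is done, no new estimate on $J_X$ and no new choice of vectors is required, since everything reduces to the nonlinearity of $J_X$ already exploited in Proposition~\ref{KL3-S2.2-ND-P2.2}.
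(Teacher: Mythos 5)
Your argument is correct, and the closedness step is identical to the paper's (continuity of $J_X$ together with continuity of the duality pairing). For the non-convexity, both you and the paper recycle the data $v=(3,-2,-1)$, $w=(1,-3,2)$, $g=\tfrac{2}{3}v+\tfrac{1}{3}w$, $y=(25,37,77)$ from Proposition~\ref{KL3-S2.2-ND-P2.2}, but you choose $\psi$ differently, and this is where the routes diverge. The paper takes $\psi=(-\beta,-\beta,-\beta)$ with $0<\beta<\tfrac{14\sqrt[3]{4}}{139}$, so that $\langle\psi,y\rangle=-139\beta\neq 0$; the defining functional is then genuinely shifted, and the smallness condition on $\beta$ is exactly what keeps $\langle J_Xg-\psi,y\rangle=-14\sqrt[3]{4}+139\beta$ negative while $\langle J_Xv-\psi,y\rangle=\langle J_Xw-\psi,y\rangle=139\beta$ stays positive. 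You instead take $\psi=(37,-25,0)$, a nonzero annihilator of $y$, which makes the set under study coincide verbatim with the one already shown non-convex in Proposition~\ref{KL3-S2.2-ND-P2.2} via \eqref{KL3-S2.2-ND-E2.2}--\eqref{KL3-S2.2-ND-E2.3}; no new inequality needs checking. Since the claim ``in general, it is not convex'' only requires one admissible counterexample with $\psi\ne\theta$, your reduction is a complete proof, and it is shorter because it dispenses with the threshold on $\beta$. What the paper's choice buys is slightly more information: it shows the failure of convexity persists for $\psi$ with $\langle\psi,y\rangle\neq 0$, i.e., it is not an artifact of collapsing back to the $\psi=\theta$ case. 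Either argument is acceptable for the statement as written.
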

\begin{proof} By the properties of the normalized duality map, it follows that $\left\{x\in X:\ \langle J_Xx-\psi,y\rangle\geq 0 \right\}$ is closed. To show that it is not convex, we modify the counterexample given in Proposition~\ref{KL3-S2.2-ND-P2.2}.

Let $v=(3,-2,-1)$, $w=(1,-3,2)$ and $y=(25,37,77)$. For $\beta>0$, we take $\psi=(-\beta,-\beta,-\beta)$ with $\beta <\frac{14\sqrt[3]{4}}{139}$. Then,
\begin{align*}
\langle J_Xv-\psi,y\rangle&=\langle -\psi,y\rangle=139\beta>0,\\
\langle J_Xw-\psi,y\rangle&=\langle -\psi,y\rangle=139\beta>0.
\end{align*}
For $g=\frac23 v+\frac13w$, as in Proposition~\ref{KL3-S2.2-ND-P2.2}, we have $\langle J_X g-\psi\rangle=-14\sqrt[3]{4}+139\beta<0,$ which proves that $g \notin \{x\in X: \langle J_Xx-\psi,y\rangle\geq 0 \}$, and hence $\{x\in X: \langle J_Xx-\psi,y\rangle\geq 0 \} $ is not convex.
\end{proof}

A modification of Proposition~\ref{KL3-S2.2-ND-P2.4} proves the following result.
\begin{pr}\label{KL3-S2.2-ND-P2.5} Let $X$ be a uniformly convex and uniformly smooth Banach space. For $\theta \ne y\in X$ and $\theta \ne \psi\in X^*$, the set
$$\left\{x\in X:\ \langle J_Xx-\psi,y\rangle\leq 0 \right\},$$
is closed. However, in general, it is not convex.
\end{pr}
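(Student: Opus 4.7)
The plan is to mirror the argument used in Proposition~\ref{KL3-S2.2-ND-P2.4}, with signs flipped to accommodate the reversed inequality. Closedness is immediate from property $(J_1)$ of Lemma~\ref{KL3-S2.2-L2.1}: since $J_X$ is continuous, the real-valued map $x \mapsto \langle J_X x - \psi, y\rangle$ is continuous on $X$, and the set in the statement is the preimage of the closed half-line $(-\infty,0]$, hence closed.

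For non-convexity, the cleanest route is a direct reduction. Observe that
$$\langle J_X x - \psi, y\rangle \leq 0 \quad \Longleftrightarrow \quad \langle J_X x - \psi, -y\rangle \geq 0,$$
so the set coincides with $\{x \in X : \langle J_X x - \psi, -y\rangle \geq 0\}$. Since $y \neq \theta$ implies $-y \neq \theta$, Proposition~\ref{KL3-S2.2-ND-P2.4} applied to the pair $(-y,\psi)$ directly delivers closedness and a counterexample to convexity.

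To keep the proof self-contained and in the same style as Proposition~\ref{KL3-S2.2-ND-P2.4}, I would alternatively reuse the vectors $v=(3,-2,-1)$ and $w=(1,-3,2)$ from Example~\ref{KL3-S2.2-Ex2.2}, but swap $y$ for $\tilde y = (-25,-37,-77)$. The computations of Proposition~\ref{KL3-S2.2-ND-P2.2} then give $\langle J_X v,\tilde y\rangle = \langle J_X w,\tilde y\rangle = 0$, while $\langle J_X g,\tilde y\rangle = +14\sqrt[3]{4} > 0$ for the convex combination $g = \tfrac{2}{3}v + \tfrac{1}{3}w$. Choosing $\psi = (-\beta,-\beta,-\beta)$ with $0 < \beta < \tfrac{14\sqrt[3]{4}}{139}$, so that $\langle \psi,\tilde y\rangle = 139\beta$, yields $\langle J_X v - \psi,\tilde y\rangle = \langle J_X w - \psi,\tilde y\rangle = -139\beta \leq 0$, whereas $\langle J_X g - \psi,\tilde y\rangle = 14\sqrt[3]{4} - 139\beta > 0$. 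Thus $v,w$ belong to the set but $g$ does not.

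I do not anticipate any substantive obstacle. The only point worth watching is the choice of signs of $y$ and $\psi$: one must arrange that $v$ and $w$ land in the new set while $g$ escapes it, and the quantitative bound $\beta < 14\sqrt[3]{4}/139$ is forced by the numerical values already computed in Propositions~\ref{KL3-S2.2-ND-P2.2} and \ref{KL3-S2.2-ND-P2.4}.
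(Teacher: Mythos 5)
Your proposal is correct and matches the paper's approach: the paper gives no explicit proof of Proposition~\ref{KL3-S2.2-ND-P2.5}, stating only that it follows by modifying Proposition~\ref{KL3-S2.2-ND-P2.4}, and your sign-flipped counterexample (equivalently, the reduction obtained by replacing $y$ with $-y$) is precisely that modification. The arithmetic checks out: $\langle J_Xv-\psi,\tilde y\rangle=\langle J_Xw-\psi,\tilde y\rangle=-139\beta<0$ while $\langle J_Xg-\psi,\tilde y\rangle=14\sqrt[3]{4}-139\beta>0$ for $0<\beta<\frac{14\sqrt[3]{4}}{139}$.
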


Before our next result, we recall notions of some specific sets in Banach spaces. Given any Banach space $X$, for any  $u,v\in X$ with $u\ne v$, we write
\begin{description}
\item[($a$)] $[v,u]=\{tv+(1-t)u:\ 0\leq t\leq 1\}.$
\item[($b$)] $[v,u\lceil=\{tv+(1-t)u:\ 0\leq t<\infty\}.$
\item[($c$)] $\rceil u,v \lceil=\{tv+(1-t)u:\ \infty < t <\infty \}.$
\end{description}
The set $[v,u]$ is a closed segment  with end points $u$ and $v$. The set $[v,u\lceil$ is a closed ray in $X$ with initial point $v$ with direction $u-v$, which is a closed convex cone with vertex at $v$ and is a special case of cones in $X$.  The set $\rceil u,v \lceil$ is a line in $X$ passing through points $v$ and $u$.

 We have the following result concerning the images of segments, rays, and lines under the normalized duality map.
\begin{pr} \label{KL3-S2.2-ND-P2.7}  Let $X$ be a uniformly convex and uniformly smooth Banach space. Let $u,v\in X$ with $u\ne v.$  If $u$ and $v$ are linearly dependent, that is, $\theta \in \rceil u,v \lceil$, then we have
\begin{description}
\item[($a$)] $J[v,u]=[Jv,Ju]$, which is a closed segment in $X^*$ with end points $Jv$ and $Ju$.
\item[($b$)] $J[v,u\lceil=[Jv,Ju\lceil$, which is a closed ray in $X^*$ with end points $Jv$ and $Ju$.
\item[($c$)] $J \rceil u,v \lceil=\rceil Ju,Jv \lceil$, which is a 1-d subspace in $X^*$ through point $Jv$ and direction $Ju-Jv$.
\end{description}
Furthermore, if $u$ and $v$ are linearly independent, that is, $\theta \notin \rceil u,v \lceil$, then we have
\begin{description}
\item[($d$)] $J[v,u]$ is a closed curve in $X^*$ with end points $Jv$ and $Ju$; it may not be a segment.
\item[($e$)] $J[v,u\lceil$ is a closed curve in $X^*$ with end point $Jv$ and through $Ju$; it may not be a ray.
\item[($f$)] $J\rceil u,v \lceil$ is a closed curve in $X^*$ through $Jv$ and $Ju$; it may not be a line.
\end{description}
\end{pr}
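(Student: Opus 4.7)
The plan is to split the argument into the linearly dependent case (parts (a)--(c)) and the linearly independent case (parts (d)--(f)), exploiting two facts about $J = J_X$: it is homogeneous on all of $\mathbb{R}$ in the sense that $J(\lambda x) = \lambda J x$ for every $\lambda \in \mathbb{R}$ and every $x \in X$, and it is a homeomorphism of $X$ onto $X^*$ in the uniformly convex and uniformly smooth setting. The first fact sharpens what is recorded in $(J_1)$: since $\lambda J x$ satisfies $\|\lambda J x\|_{X^*} = \|\lambda x\|_X$ and $\langle \lambda J x,\lambda x\rangle = \lambda^2\|x\|_X^2 = \|\lambda x\|_X^2$, uniqueness of the normalized duality element in the smooth setting forces $J(\lambda x)=\lambda Jx$ for every real $\lambda$; this is also visible in the coordinate formula of Example~\ref{KL3-S2.2-Ex2.2}.

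For parts (a)--(c), linear dependence with $u\ne v$ means, up to a harmless swap, that $u=\lambda v$ for some $\lambda\in\mathbb{R}$ with $\lambda\ne 1$. A generic point of the segment, ray, or line is $tv+(1-t)u=(t+(1-t)\lambda)\,v$, and scalar homogeneity yields
\[
J\bigl((t+(1-t)\lambda)\,v\bigr)=(t+(1-t)\lambda)\,Jv=t\,Jv+(1-t)\,Ju.
\]
As $t$ ranges through $[0,1]$, $[0,\infty)$, or $\mathbb{R}$, the image traces out exactly $[Jv,Ju]$, $[Jv,Ju\lceil$, or $\rceil Ju,Jv\lceil$; for (c) the condition $\lambda\ne 1$ makes $t\mapsto t+(1-t)\lambda$ a bijection of $\mathbb{R}$, so no point of the line is missed.

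For parts (d)--(f), the images are closed connected curves with the claimed endpoints and passage properties as soon as we know $J$ is a homeomorphism of $X$ onto $X^*$ (so that $J$ carries closed, path-connected sets to closed, path-connected sets), a standard consequence of $(J_1)$--$(J_3)$. To show that in general the image is neither a segment, a ray, nor a line, I would recycle the triple from Proposition~\ref{KL3-S2.2-ND-P2.2}: take $v=(3,-2,-1)$, $w=(1,-3,2)$ in the space of Example~\ref{KL3-S2.2-Ex2.2}, and let $g=\tfrac{2}{3}v+\tfrac{1}{3}w$, which lies in $[w,v]$ and hence also in $[w,v\lceil$ and in $\rceil w,v\lceil$. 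The coordinate formulas of Example~\ref{KL3-S2.2-Ex2.2} give $Jg=\tfrac{7}{3\sqrt[3]{2}}(1,-1,0)$, while $Jv$ and $Jw$ each have a nonzero third component; forcing the third coordinate of $sJv+(1-s)Jw$ to vanish pins $s=4/5$, and the resulting first coordinate fails to match that of $Jg$. Hence $Jg$ lies off the affine line through $Jv$ and $Jw$, the three image points $Jv,Jw,Jg$ are not collinear, and the image cannot be contained in any line; this single counterexample witnesses (d), (e), and (f) simultaneously.

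The main obstacle I anticipate is the closedness assertion in (e) and (f), whose domains are not compact, so that continuity of $J$ alone does not deliver a closed image. Here I would lean on $J$ being a homeomorphism — an ingredient baked into the uniformly convex and uniformly smooth hypothesis via $(J_2)$ together with the modulus-of-convexity estimate in $(J_3)$ — and on the norm preservation $\|Jx\|_{X^*}=\|x\|_X$ to rule out sequences escaping to infinity. Everything else is a direct unwinding of scalar homogeneity combined with arithmetic already performed in the proof of Proposition~\ref{KL3-S2.2-ND-P2.2}.
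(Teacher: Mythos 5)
Your proposal is correct, and for parts (a)--(c) it coincides with the paper's argument: both reduce to $u=\lambda v$ and use scalar homogeneity $J(\lambda x)=\lambda Jx$ to push the affine parametrization through $J$ (your remark that $t\mapsto t+(1-t)\lambda$ is a bijection of $\mathbb{R}$ when $\lambda\ne 1$ is a detail the paper leaves implicit). Where you genuinely diverge is in the counterexample for (d)--(f). The paper takes $u=(0,-1,1)$, $v=(-1,1,0)$, forms the convex combination $\psi=\tfrac14 Ju+\tfrac34 Jv\in[Jv,Ju]$, computes $J^*\psi$ explicitly, and shows $J^*\psi\notin[v,u]$, concluding $\psi\notin J[v,u]$ since $J$ and $J^*$ are mutually inverse; it then asserts that (e) and (f) ``follow immediately.'' You instead push forward: you reuse the triple $v,w,g=\tfrac23v+\tfrac13w$ from Proposition~\ref{KL3-S2.2-ND-P2.2} and verify that $Jg$ does not lie on the affine line through $Jv$ and $Jw$ (the third coordinate forces $s=4/5$, and the first coordinate then gives $\tfrac{37}{5\sqrt[3]{36}}\ne\tfrac{7}{3\sqrt[3]{2}}$, which checks out). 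Your route buys two things: it avoids computing $J^*$ altogether, and non-collinearity of three image points rules out segment, ray, and line in one stroke, so (d), (e), and (f) are all witnessed by the same computation rather than being deferred to (d). It also sidesteps a small gap in the paper's logic, namely that ``$J[v,u]\ne[Jv,Ju]$'' only excludes being a segment after one notes the arc is simple with endpoints $Jv,Ju$. Finally, your treatment of closedness in (e) and (f) via $JL=(J^*)^{-1}(L)$ (or boundedness of preimages from $\|Jx\|_{X^*}=\|x\|_X$ plus compactness in the one-dimensional affine set) is more careful than the paper, which does not address the non-compactness of the ray and the line at all.
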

\begin{proof} Suppose that $u$ and $v$ are linearly dependent, then we can assume that there is a real number $a\ne 0$ such that $u=av$. It follows that
$$[v,u]=\{tv+(1-t)u:\ 0\leq t\leq 1\}=\{(t+(1-t)a)v:\ 0\leq t\leq 1\},$$
Furthermore, by the homogeneity of $J$, we have
\begin{align*}
J[v,u]&=\{J(tv+(1-t)u):\ 0\leq t\leq 1\}\\
&=\{(t+(1-t)a)Jv:\ 0\leq t\leq 1\}\\
&=\{tJv+(1-t)J(av):\ 0\leq t\leq 1\}\\
&=\{(tJv+(1-t)Ju:\ 0\leq t\leq 1\}\\
&=[Jv,Ju],
\end{align*}
which completes the proof of (a). Parts (b) and (c) can be proved by analogous arguments.

For (d), we note that since $X$ is a uniformly convex and uniformly smooth Banach space, the duality map $J$ is one-to-one and continuous mapping from $X$ into $X^*$. It follows that $J[v,u]$ is a closed curve in $X^*$ with end points $Jv$ and $Ju$. We construct a counterexample to show that $J[v,u]$ is not a segment.

Let $(X,\|\cdot\|_3)$ be the uniformly convex and uniformly smooth Banach space with dual space $(X^*,\|\cdot\|_{\frac32})$. Take $u=(0,-1,1)$ and $v=(-1,1,0)$ in $X$. Then,we have
\begin{align*}
Ju&=\frac{1}{\sqrt[3]{2}}(0,-1.1)\in X^*\\
Jv&=\frac{1}{\sqrt[3]{2}}(-1,1,0)\in X^*.
\end{align*}
We take a convex combination $\psi$ of $Ju$ and $Jv$ by
$$\psi=\frac14 Ju+\frac34 Jv=\frac{1}{\sqrt[3]{2}}\left(-\frac34,\frac24,\frac14\right)=\frac{1}{4\sqrt[3]{2}}(-3,2,1).$$
Since $Jv$ and $Ju$ are both in $J[v,u]$, to prove that it is not a segment, we need to prove that
\begin{equation}\label{KL3-NE2.3}
J[v,u]\ne [Ju,Jv].
\end{equation}
Since $\psi\in [Jv,Ju]$, it is sufficient to show that $\psi\notin J[v,u]$. Since $J^*$ is homogeneous, we have
$$J^*\psi=\frac{1}{4\sqrt[3]2}J^*(-3,2,1)=\frac{\sqrt[3]{3^{\frac32}+2^{\frac32}+1}}{4\sqrt[3]{2}}\left(-\sqrt{3},\sqrt{2},1\right).$$
Next we prove that $J^*\notin [v,u]$. If possible, assume that $J^*\psi \in [v,u]$. Then, there exists $\beta\in [0,1]$ such that
$$\frac{\sqrt[3]{3^{\frac32}+2^{\frac32}+1}}{4\sqrt[3]{2}}\left(-\sqrt{3},\sqrt{2},1\right)=\beta v+(1-\beta)u=(-\beta,-1+2\beta,1-\beta)$$
which implies
$$\frac{\sqrt[3]{3^{\frac32}+2^{\frac32}+1}}{4\sqrt[3]{2}}\times \sqrt{3}=1-\frac{\sqrt[3]{3^{\frac32}+2^{\frac32}+1}}{4\sqrt[3]{2}}$$
which is a contradiction. Hence we have shown that $J^*\psi \notin [v,u]$

Since $X$ is uniformly convex and uniformly smooth, both $J$ and $J^*$ are one-to-one and onto mappings, which are inverse of each other. Then, we have
$\psi=J(J^*\psi)\notin J[v,u]$ which implies \eqref{KL3-NE2.3}. Parts (e) and (f) follow from (d) immediately.
\end{proof}
\begin{co}\label{KL3-C2.8}  Let $X$ be a uniformly convex and uniformly smooth Banach space  and let $K$ be a closed cone in $X$ with vertex at $v\in X$.
\begin{description}
\item[(i)] If $v=\theta$, then $JK$ is a closed cone in $X^*$ with vertex at $\theta^* =J\theta.$ However, $JK$ is not necessarily convex even if $K$ is convex.
\item[(ii)] If $v\ne \theta$ or $K$ is a ray with $\theta \notin \overset{\leftrightarrow}{K}$, then $JK$ is not a cone. Here $\overset{\leftrightarrow}{K}$ is a line containing the ray $K$.
\end{description}
\end{co}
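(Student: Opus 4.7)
The plan is to tackle parts (i) and (ii) separately. For (i), the key tools are the homogeneity of $J$ and Proposition~\ref{KL3-S2.2-ND-P2.2} applied in $X^*$; for (ii), the main tool is Proposition~\ref{KL3-S2.2-ND-P2.7}(e) combined with a dimensional argument ruling out all possible vertices.

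For (i), I would first show $JK$ is a closed cone with vertex $\theta^* = J\theta$. By Lemma~\ref{KL3-S2.2-L2.1}$(J_1)$, $J$ is (positively) homogeneous, so for $y = Jx \in JK$ and $t \geq 0$ one has $ty = tJx = J(tx) \in JK$, since $tx \in K$. Continuity of $J^* = J^{-1}$ gives $JK = (J^*)^{-1}(K)$ closed as the preimage of a closed set. For the non-convexity I would take $K = \{x \in X : \phi_0(x) \geq 0\}$ for some nonzero $\phi_0 \in X^*$; this is a closed convex cone with vertex $\theta$, and
\[
JK = \{\psi \in X^* : \phi_0(J^*\psi) \geq 0\}.
\]
Since $X^*$ is itself uniformly convex and uniformly smooth with normalized duality map $J^*$, Proposition~\ref{KL3-S2.2-ND-P2.2} applied in $X^*$ (with $\phi_0$ in the role of $y$) shows this set is not convex in general.

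For (ii), the heart of the argument is the ray case. Take $K = \{v + tu : t \geq 0\}$ with $v, u$ linearly independent, i.e., $\theta \notin \overset{\leftrightarrow}{K}$. Proposition~\ref{KL3-S2.2-ND-P2.7}(e) then gives that $JK$ is a closed curve in $X^*$ that is not a ray. Since $J$ is a continuous injection and $t \mapsto v + tu$ is a homeomorphism of $[0,\infty)$ onto $K$, the set $JK$ is a simple arc with unique endpoint $Jv$. Assuming for contradiction that $JK$ is a cone with some vertex $w \in X^*$, each ray $\{w + t(y - w) : t \geq 0\}$ with $y \in JK$ would lie in $JK$; but a one-dimensional simple arc that is a cone must itself be a ray (it cannot be a full line, having an endpoint, and two distinct rays through $w$ would force a two-dimensional branching), contradicting Proposition~\ref{KL3-S2.2-ND-P2.7}(e). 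For the broader case $v \ne \theta$ with $K$ a general closed cone, I would extract a ray $R \subset K$ from $v$ in a direction linearly independent from $v$ (possible because $v \ne \theta$), and pair the above curve argument with a direct test of the cone property at the candidate vertex $Jv$, using the explicit formulas of Example~\ref{KL3-S2.2-Ex2.2} to exhibit failure.

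The main obstacle is the topological step in (ii): one must rule out $JK$ being a cone with any vertex, not just the natural candidate $Jv$. The argument rests on characterizing one-dimensional cones as rays or lines, then using the endpoint structure of $JK$ (a simple arc with a single boundary point $Jv$) to rule out the line possibility, and Proposition~\ref{KL3-S2.2-ND-P2.7}(e) to rule out the ray possibility.
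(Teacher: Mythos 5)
Your proposal is correct in substance but takes a genuinely different route from the paper on both halves. For the non-convexity in (i), the paper does not use a half-space: it takes $K=\{w\in X:\langle\phi,w\rangle=0\}$ with $\phi=(1,1,1)$ in $(\mathds{R}^3,\|\cdot\|_3)$, reuses the points $u=(0,-1,1)$, $v=(-1,1,0)$ and the combination $\psi=\tfrac34 Jv+\tfrac14 Ju$ from Proposition~\ref{KL3-S2.2-ND-P2.7}(d), and checks by direct computation that $\langle\phi,J^*\psi\rangle>0$, hence $J^*\psi\notin K$ and $\psi\notin JK$. Your identity $JK=\{\psi\in X^*:\ \phi_0(J^*\psi)\geq 0\}$ and the reduction to Proposition~\ref{KL3-S2.2-ND-P2.2} applied in $X^*$ is cleaner and avoids redoing the arithmetic; the one thing you must add is a choice of $X$ that places the proposition's concrete counterexample in the dual, e.g.\ $X=(\mathds{R}^3,\|\cdot\|_{3/2})$ so that $X^*=(\mathds{R}^3,\|\cdot\|_3)$ and $\phi_0=(25,37,77)$. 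For (ii) the paper gives no argument beyond ``can be proved as Proposition~\ref{KL3-S2.2-ND-P2.7},'' so your topological argument is a genuine addition, and its skeleton (a set homeomorphic to $[0,\infty)$ that is a cone must be a single ray, contradicting Proposition~\ref{KL3-S2.2-ND-P2.7}(e)) is the right one.

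Two caveats on (ii). First, two distinct rays emanating from a common vertex $w$ do not force a ``two-dimensional branching'': their union is a V, which is one-dimensional and homeomorphic to $\mathds{R}$. The correct reason a second ray is impossible is that it would give a closed connected subset of $JK\cong[0,\infty)$ homeomorphic to $\mathds{R}$, whereas every closed connected subset of $[0,\infty)$ is either compact or has an endpoint; this repairs your step without changing the conclusion. Second, your plan for a general closed cone with vertex $v\neq\theta$ (extract a ray from $v$ in a direction linearly independent of $v$) fails precisely when $K=\{(1+t)v:\ t\geq 0\}$, where no such direction exists; there homogeneity gives $JK=\{sJv:\ s\geq 1\}$, which \emph{is} a cone with vertex $Jv$. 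This is exactly the case the hypothesis $\theta\notin\overset{\leftrightarrow}{K}$ is meant to exclude for rays, so either restrict the general case to cones containing a direction independent of $v$, or read (ii), as the paper evidently intends, as an ``in general'' assertion established by examples.
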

\begin{proof} Since $X$ is uniformly convex and uniformly smooth, $J$ is continuous and positive homogeneous, and hence $JK$ is a closed cone in $X^*$ with vertex at $\theta =J\theta.$  To show that the convexity of $K$ does not imply the convexity of $JK$, we construct a counterexample. Let $X=\mathds{R}^3$ be as in Example~\ref{KL3-S2.2-Ex2.2}.  We take $\phi=(1,1,1)\in X^*$ and define
\begin{equation}\label{New2.3}
K=\{w\in X:\ \langle \phi,w\rangle=0 \}.
\end{equation}
Then $K$ is a closed subspace of $X$ which is a closed and convex cone in $X$ with vertex $\theta$. We claim that $JK$ is not convex.

We take two points $u,v\in X$ given by $u=(0,-1,1)$ and $v=(-1,1,0).$ Then, $\langle \phi,u\rangle=0 $ and $\langle \phi,v\rangle=0$ From Proposition~\ref{KL3-S2.2-ND-P2.7}, we have
$Jv=\frac{1}{\sqrt[3]{2}}(-1,1,0)$ and $Jv=\frac{1}{\sqrt[3]{2}}(0,-1,1)$. We define
$$\psi=\frac34 Jv+\frac14 Ju=\frac{1}{4\sqrt[3]{2}}(-3,2,1).$$
Then, we have
$$J^*\psi=\frac{\sqrt[3]{3^{\frac32}+2^{\frac32}+1}}{4\sqrt[3]{2}}\left(-\sqrt{3},\sqrt{2},1\right),$$
which implies
$$\langle \phi,J^*\psi\rangle =\frac{\sqrt[3]{3^{\frac32}+2^{\frac32}+1}}{4\sqrt[3]{2}} \left(-\sqrt{3}+\sqrt{2}+1\right)>0$$
and hence $J^*\psi\notin K.$  As before, this implies that $\psi=JJ^*\psi\notin JK$, proving that $JK$ is not convex. The remaining part can be proved as Proposition~\ref{KL3-S2.2-ND-P2.7}.
\end{proof}

\subsection{Projections in Banach Spaces}

Let $X$ be a uniformly convex and uniformly smooth Banach space and let $C$ be a nonempty, closed, and convex subset of $X$. We define a Lyapunov function $V:X^*\times X\to \mathds{R}$ by the formula:
$$V(\psi,x)=\|\psi\|^2_{X^*}-2\langle \psi,x\rangle+\|x\|_X^2,\quad \text{for any}\ \psi\in X^*,\ x\in X.$$

We shall now recall useful notions of projections in Banach spaces.
\begin{defn}
Let $X$ be a uniformly convex and uniformly smooth Banach space, let $X^*$ be the dual of $X$, and let $C$ be a nonempty, closed, and convex subset of $X$.
\begin{description}
\item[1.] The metric projection $P_C:X\to C$ is a single-valued defined by
$$\|x-P_Cx\|_X\leq \|x-z\|_X,\quad \text{for all}\ z\in C.$$
\item[2.] The generalized projection $\pi_C:X^*\to C$ is a single-value map that satisfies
\begin{equation}\label{KL3-S2.5-GMP-E3.3}
V(\psi,\pi_C\psi)=\inf_{y\in C}V(\psi,y),\quad \text{for any}\ \psi\in X^*.
\end{equation}
\item[3.] The generalized metric projection $\Pi_C:X\to C$ is a single-valued map defined by
\begin{align*}\Pi_C x&=\pi_c(J_Xx),\quad \text{for any}\ x\in X,\\
\pi_C(\psi)&=\Pi_C(J_{X^*}\psi),\quad \text{for any}\ \psi\in X^*.
\end{align*}
\end{description}
\end{defn}

The following result collects some of the basic properties of the metric projection defined above.
\begin{pr}\label{KL3-S2.3-NP-P2.6}  Let $X$ be a uniformly convex and uniformly smooth Banach space and let $C$ be a nonempty, closed, and convex subset of $X$.
\begin{description}
\item[1.] The metric projection $P_C:X\to C$ is a continuous map that enjoys the following variational characterization:
\begin{equation}\label{KL3-S2.3-NP-P2.6-E1}
u=P_C(x)\quad \Leftrightarrow \quad \langle J_X(x-u),u-z\rangle\geq 0,\quad \text{for all}\ z\in C.
\end{equation}
\item[2.] The generalized projection $\pi_C:X^*\to C$ enjoys the following variational characterization: For any $\psi\in X^*$ and $y\in C,$
\begin{equation}\label{KL3-S2.4-GP-E2.5}
y=\pi_C(\psi),\quad \text{if and only if},\quad \langle \psi-J_Xy,y-z\rangle\geq 0,\quad \text{for all}\ z\in C.
\end{equation}
\item[3.] The generalized metric projection $\pi_C:X^*\to C$ enjoys the following variational characterization: For any $\psi\in X^*$ and $y\in C,$
\begin{equation}\label{KL3-S2.4-GP-E2.5-N}
y=\Pi_C(x),\quad \text{if and only if},\quad \langle J_X x-J_X\Pi_Cx,\Pi_Cx-z\rangle\geq 0,\quad \text{for all}\ z\in C.
\end{equation}
\end{description}
\end{pr}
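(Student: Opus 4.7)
The three characterizations are first-order optimality conditions for convex minimization problems, so my overall strategy is to identify each projection as the minimizer of a suitable convex Gâteaux-differentiable functional on the closed convex set $C$, compute the Gâteaux derivative (which in each case is expressible through $J_X$), and then invoke the classical variational inequality criterion for convex minimization: $u$ minimizes a convex Gâteaux-differentiable $f$ over $C$ iff $\langle f'(u),z-u\rangle\geq 0$ for all $z\in C$. The existence and uniqueness of the minimizers in a uniformly convex and uniformly smooth space $X$ is standard, and continuity of $P_C$ in such spaces is a classical result that I would cite from \cite{Tak00}.

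For part (1), the map $u\mapsto \tfrac{1}{2}\|x-u\|_X^2$ is convex and, since $X$ is uniformly smooth, Gâteaux differentiable with derivative $-J_X(x-u)$ (this is the standard identification of the derivative of $\tfrac12\|\cdot\|_X^2$ with $J_X$, which follows from property $(J_1)$ of Lemma~\ref{KL3-S2.2-L2.1} together with uniform smoothness). Since $P_Cx$ minimizes this functional over $C$, the variational inequality $\langle -J_X(x-u),z-u\rangle\geq 0$ for all $z\in C$ is equivalent to $u=P_Cx$, which rearranges to the claimed inequality. For part (2), I would apply the same recipe to the Lyapunov function $y\mapsto V(\psi,y)=\|\psi\|_{X^*}^2-2\langle\psi,y\rangle+\|y\|_X^2$; its Gâteaux derivative at $y$ equals $-2\psi+2J_Xy$, and convexity follows from the convexity of $\|\cdot\|_X^2$. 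The first-order condition $\langle -2\psi+2J_Xy,z-y\rangle\geq 0$ for all $z\in C$ is exactly \eqref{KL3-S2.4-GP-E2.5} after dividing by $2$ and reversing the sign.

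For part (3), no new work is really needed: by the definition of the generalized metric projection, $\Pi_Cx=\pi_C(J_Xx)$, so setting $\psi:=J_Xx$ in part (2) immediately gives
\[
y=\Pi_Cx\ \Longleftrightarrow\ \langle J_Xx-J_Xy,y-z\rangle\geq 0\ \text{for all } z\in C,
\]
which, after substituting $y=\Pi_Cx$, is \eqref{KL3-S2.4-GP-E2.5-N}.

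\textbf{Where the difficulty lies.} Parts (2) and (3) are essentially algebraic once part (2)'s differentiation step is in hand. The main technical point in all three parts is the identification of the Gâteaux derivative of $\tfrac12\|\cdot\|_X^2$ with $J_X$; this relies on uniform smoothness and is where one must be careful, but it is a standard fact available in \cite{Tak00}. The auxiliary claim of continuity of $P_C$ in part (1) would likewise be invoked from the literature rather than reproved. Thus I expect the writeup to be short: set up each minimization, quote or verify the derivative formula, and read off the variational inequality.
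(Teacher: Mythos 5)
Your argument is correct, but note that the paper does not actually prove this proposition: it is stated as a recollection of standard facts (the variational characterizations of $P_C$, $\pi_C$, and $\Pi_C$ going back to Alber and standard monographs such as \cite{Tak00}), so there is no in-paper proof to compare against. Your route --- realizing each projection as the minimizer of a convex G\^ateaux-differentiable functional ($u\mapsto\tfrac12\|x-u\|_X^2$ for $P_C$, $y\mapsto V(\psi,y)$ for $\pi_C$), identifying the derivative of $\tfrac12\|\cdot\|_X^2$ with $J_X$ via uniform smoothness, and reading off the first-order optimality condition over $C$ --- is exactly the classical derivation, and your reduction of part (3) to part (2) by substituting $\psi=J_Xx$ is the intended one (it also silently repairs the statement's typo, where part (3) is phrased in terms of $\psi$ but characterizes $\Pi_C x$). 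The only points you correctly flag as needing outside support, namely the single-valuedness and continuity of $P_C$ on a uniformly convex and uniformly smooth space and the differentiability of the squared norm, are precisely the ingredients the paper itself imports without proof, so your writeup is at the same level of rigor as the source.
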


To be specific, we show that a certain inverse image of the metric projection is a closed and convex cone, but it is not necessarily convex (Theorem~\ref{KL3-S3.1-T3.1}). In contrast, inverse images of the generalized projection are closed and convex cones (Theorem~\ref{KL3-S3.1-T3.3}). On the other hand, the inverse images of the generalized metric projection are neither a convex set nor a cone (Theorem~\ref{KL3-S3.1-T3.5}). We also prove, for all three notions of projections, that the distance from a point to its projection at a convex set is
a weakly lower semicontinuous function.
\section{Approximating Properties of the Projections}\label{KL3-S3-AP}
\subsection{Approximating Properties of the Metric projection}\label{KL3-S3.1 APMP}
\begin{thr}\label{KL3-S3.1-T3.1}  Let $X$ be a uniformly convex and uniformly smooth Banach space and let $C$ a nonempty, closed, and convex subset of $X$. For any $y\in C,$ let $x\in X\backslash C$ be such that $y=P_C{x}$. We define the inverse image of $y$ under the metric projection  $P_C:X\to C$ by
$$P_{C}^{-1}(y)=\{u\in X:\ P_{C}(u)=y\}.$$
Then $P_{C}^{-1}(y)$ is a closed cone with vertex at $y$ in $X$. However, $P_{C}^{-1}(y)$ is not convex, in general.
\end{thr}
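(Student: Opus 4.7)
The plan is to build the proof on the variational characterization \eqref{KL3-S2.3-NP-P2.6-E1} of the metric projection, which rewrites
$$P_C^{-1}(y) \;=\; y \;+\; \bigcap_{z\in C}\bigl\{w\in X :\; \langle J_X w,\, y-z\rangle \geq 0\bigr\}.$$
Closedness is then immediate from the continuity of $P_C$ recorded in Proposition~\ref{KL3-S2.3-NP-P2.6}, since $P_C^{-1}(y)$ is the preimage of the singleton $\{y\}$ under a continuous map.

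For the cone property with vertex $y$, I pick any $u\in P_C^{-1}(y)$ and $t\geq 0$ and set $u_t:=y+t(u-y)$. Using the positive homogeneity of $J_X$ from ($J_1$) in Lemma~\ref{KL3-S2.2-L2.1}, I obtain
$$\langle J_X(u_t-y),\,y-z\rangle \;=\; t\,\langle J_X(u-y),\,y-z\rangle \;\geq\; 0 \quad \text{for every } z\in C,$$
and the reverse implication in \eqref{KL3-S2.3-NP-P2.6-E1} then yields $u_t\in P_C^{-1}(y)$; the case $t=0$ is covered by $y\in C$, which forces $P_C(y)=y$.

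For the failure of convexity, the strategy is to engineer $C$ so that the family of inequalities defining $P_C^{-1}(y)$ collapses to a single duality-pairing inequality of the type already shown to define a non-convex set in Proposition~\ref{KL3-S2.2-ND-P2.2}. Working in $X=\mathds{R}^3$ equipped with the $3$-norm of Example~\ref{KL3-S2.2-Ex2.2}, I take $y=\theta$ and let $C=\{-s y_0:\,s\geq 0\}$ be the ray in direction $-y_0$, where $y_0=(25,37,77)$ is the vector used in Proposition~\ref{KL3-S2.2-ND-P2.2}. This $C$ is a nonempty, closed, and convex subset of $X$, and the condition $\langle J_X u,\,\theta-z\rangle\geq 0$ for all $z\in C$ simplifies, via factoring out the nonnegative parameter $s$, to the single inequality $\langle J_X u,\,y_0\rangle \geq 0$. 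Hence $P_C^{-1}(\theta)$ coincides exactly with the non-convex set displayed in Proposition~\ref{KL3-S2.2-ND-P2.2}, and the vectors $v=(3,-2,-1)$, $w=(1,-3,2)$ together with the convex combination $g=\tfrac23 v+\tfrac13 w$ constructed there transfer verbatim as the witness.

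The only genuinely delicate point is aligning sign conventions when collapsing the system of inequalities: orienting $C$ along $-y_0$ rather than $+y_0$ is what allows the nonnegative scalar $s$ to be discarded, leaving $\langle J_X u,y_0\rangle\geq 0$ as the defining condition so that Proposition~\ref{KL3-S2.2-ND-P2.2} applies directly. Once this bookkeeping is in place, the argument is essentially a translation of $P_C^{-1}(y)$ back to the origin combined with the previously established non-convexity result.
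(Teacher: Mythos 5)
Your proof is correct and follows essentially the same route as the paper: closedness from the continuity of $P_C$, the cone property from the homogeneity of $J_X$ via the characterization \eqref{KL3-S2.3-NP-P2.6-E1}, and non-convexity by collapsing the constraint family to the single inequality $\langle J_Xu,y_0\rangle\geq 0$ of Proposition~\ref{KL3-S2.2-ND-P2.2} with the same witnesses $v$, $w$, and $g=\tfrac23 v+\tfrac13 w$. The only (immaterial) difference is your choice of $C$ as the ray $\{-sy_0:\ s\geq 0\}$ with $y=\theta$, whereas the paper takes the segment $C=\{ty_0:\ t\in[0,1]\}$ and projects onto the endpoint $y=y_0$; both reduce to the identical computation.
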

\begin{proof} Since $P_{C}$ is continuous, $P_C^{-1}$ is closed. To show that $P_C^{-1}(y)$ is a cone with vertex at $y$, note that for any $u\in P_C^{-1}(y)$ with $u\ne y$, and for any $t\geq 0$, by the fact $J_X$ is homogeneous, we have
$$\langle J_X(y+t(u-y)-y),y-z\rangle=t\langle J_X(u-y),y-z\rangle\geq 0,\ \text{for all}\ z\in C. $$
Thus, by \eqref{KL3-S2.3-NP-P2.6-E1}, we get $y+t(u-y)\in P_C^{-1}(y)$, ensuring that $P_C^{-1}(y)$ is a cone with vertex at $y$.

We will construct a counter-example to show that the set $P_C^{-1}(y)$ is not convex. Let $X=\mathds{R}^3$ be the uniformly convex and uniformly smooth Banach space equipped with the $\|\cdot\|_3$-norm as defined in Proposition~\ref{KL3-S2.2-ND-P2.2}. We take $y=(25,37,77)\in X$. We define the closed segment in $X$ with $\theta$ and $y$ as the end points by
$$C=\{ty\in X:\ t\in [0,1]\}.$$
For $v=(3,-2,-1)$, $w=(1,-3,2)$, we define
$$x=v+y=(28,35,76)\quad \text{and}\quad z=w+y=(26,34,79).$$
Then, as in Proposition~\ref{KL3-S2.2-ND-P2.2}, for any $ty\in C$ with $t\in [0,1]$, we have
$$\langle J_X(x-y),y-ty\rangle=(1-t)\langle J_Xv,y\rangle=0,\quad \text{for any}\ ty\in C.  $$
By \eqref{KL3-S2.3-NP-P2.6-E1}, this implies that $x\in P_C^{-1}(y)$. We can similarly prove that $z\in P_C^{-1}(y)$. Next, we define
\begin{align*}g&=\frac{2}{3}v+\frac13 w=\left(\frac73,-\frac73,0\right),\\
h&=\frac23x+\frac13 z=\frac23 v+\frac23 w+y=g+y,
\end{align*}
and compute for $ty\in C$ with $t\in [0,1]:$
$$\langle J_X(h-y),y-ty\rangle=\langle J_X\left(\frac23 x+\frac13z-y\right),y-ty\rangle=\langle J_X(g),y-ty\rangle=(1-t)\langle J_Xg,y\rangle=-14\sqrt[3]{4}(1-t)<0.$$
Again, by \eqref{KL3-S2.3-NP-P2.6-E1}, we have $P_Ch\neq y$, that is $h\notin P_C^{-1}(y)$, and hence it is not convex.
\end{proof}
\begin{rem}  Theorem~\ref{KL3-S3.1-T3.1} can be proved without using the basic variational principle of the metric projection.
\end{rem}
\begin{thr}\label{KL3-S3.1-T3.2}  Let $X$ be a uniformly convex and uniformly smooth Banach space and let $C$ be a nonempty, closed, and convex subset of $X$. Then the distance from a point to its projection at $C$ is a weakly lower semicontinuous function. That is, for any $\{x_n\}\subset X$ and $x\in X$, we have
$$x_n\rightharpoonup x\quad \Rightarrow\quad \|x-P_Cx\|_X\leq \liminf_{n\to \infty}\|x_n-P_cx_n\|_X.$$
\end{thr}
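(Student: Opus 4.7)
The plan is to combine the defining inequality of the metric projection with the weak lower semicontinuity of the norm, using reflexivity of $X$ to extract a weakly convergent subsequence of the projected points. Specifically, I would set $\alpha := \liminf_{n\to\infty}\|x_n - P_Cx_n\|_X$ and pass to a subsequence (still indexed by $n$) along which the $\liminf$ is realized as a genuine limit.

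Next, I would argue that $\{P_Cx_n\}$ is bounded. Since $x_n \rightharpoonup x$, the Banach--Steinhaus theorem yields $\sup_n\|x_n\|_X < \infty$, and the triangle inequality $\|P_Cx_n\|_X \leq \|x_n\|_X + \|x_n - P_Cx_n\|_X$, together with $\|x_n - P_Cx_n\|_X \to \alpha < \infty$, forces boundedness. Because $X$ is uniformly convex and hence reflexive, I can extract a further subsequence (not relabeled) with $P_Cx_n \rightharpoonup y$ for some $y \in X$. The set $C$, being closed and convex, is weakly closed, so $y \in C$.

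Finally, I would exploit the weak convergence $x_n - P_Cx_n \rightharpoonup x - y$ together with the weak lower semicontinuity of the norm to obtain
$$\|x - y\|_X \;\leq\; \liminf_{n\to\infty}\|x_n - P_Cx_n\|_X \;=\; \alpha.$$
Because $y \in C$, the minimality property of $P_Cx$ gives $\|x - P_Cx\|_X \leq \|x - y\|_X \leq \alpha$, which is precisely the desired estimate. The only mildly delicate step is the boundedness-then-reflexivity argument that produces the weak subsequential limit $y$; once this is in hand, the rest reduces to the standard weak lower semicontinuity of the norm and the defining inequality of $P_C$. As a remark, one could alternatively invoke the fact that $x \mapsto \|x - P_Cx\|_X = d(x,C)$ is a $1$-Lipschitz convex function and therefore weakly lower semicontinuous by general convex analysis, but the subsequence approach matches the variational style of the paper and avoids appealing to external machinery.
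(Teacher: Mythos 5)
Your proof is correct, but it takes a genuinely different route from the paper. The paper stays entirely within the variational framework: it invokes the characterization $\langle J_X(x-P_Cx),P_Cx-z\rangle\geq 0$ with $z=P_Cx_n$, rearranges to get the pointwise bound $\|x_n-P_Cx_n\|_X\geq \|x-P_Cx\|_X+\langle J_X(x-P_Cx),x_n-x\rangle/\|x-P_Cx\|_X$, and then lets the last term vanish because $x_n\rightharpoonup x$; no subsequence extraction or compactness is needed. You instead use a compactness argument: norm-boundedness of $\{P_Cx_n\}$ via Banach--Steinhaus and the triangle inequality, reflexivity (from uniform convexity) to extract a weak limit $y$ of the projections, Mazur's theorem to place $y$ in $C$, weak lower semicontinuity of the norm, and finally the minimality of $P_Cx$ over $C$. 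Each step is sound, including the reduction to a subsequence realizing the $\liminf$ (with the trivial case $\alpha=+\infty$ implicitly handled). What the paper's argument buys is a quantitative inequality that serves as a template reused later for the generalized metric projection (Theorem~\ref{KL3-S3.1-T3.8}, where the analogous manipulation produces the constant $K_R$); what your argument buys is independence from the duality map and the variational characterization altogether, so it works in any reflexive Banach space in which $P_C$ is well defined. Your closing remark is also correct and is in fact the shortest proof: $\|x-P_Cx\|_X=d(x,C)$ is a convex, $1$-Lipschitz function, hence weakly lower semicontinuous, though that observation bypasses the paper's variational style entirely.
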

\begin{proof} Since the claim is trivial for $x\in C,$ we assume that $x\notin C.$ For every $n\in \mathds{N}$, since $P_Cx_n\in C,$ by the variational characterization \eqref{KL3-S2.3-NP-P2.6-E1} of the metric projection, we have
$$\langle J_X(x-P_Cx),P_Cx-P_Cx_n\rangle\geq 0, $$
which can be rearranged as
$$\langle J_X(x-P_Cx),x_n-P_Cx_n\rangle\geq \langle J_X(x-P_Cx),x-P_Cx\rangle+\langle J_X(x-P_Cx),x_n-x\rangle,$$
and subsequently
$$\|J_X(x-P_Cx)\|_{X^*}\|x_n-P_Cx_n\|\geq \|x-P_Cx)\|^2+\langle J_X(x-P_Cx),x_n-x\rangle.$$
Since $x\ne P_Cx$, we obtain
$$\|J_X(x-P_Cx)\|_{X^*}=\|x-P_Cx\|_X>0,$$
which implies
$$\|x_n-P_Cx_n\|\geq \|x-P_Cx\|_X+\frac{\langle J_X(x-P_Cx),x_n-x\rangle }{\|x-P_Cx\|_X},\quad n\in \mathds{N}.$$
Taking $\liminf$ in the above inequality, yields the desired result. Note that here we used the fact that for $J_X(x-P_Cx)\in X^*,$ we have
$\langle J_X(x-P_Cx),x_n-x\rangle\to 0\ \text{as}\ n\to \infty.$ The proof is complete.
\end{proof}
\subsection{Approximating Properties of the Generalized projection}\label{KL3-S3.2 APGP}
\begin{thr}\label{KL3-S3.1-T3.3} Let $X$ be a uniformly convex and uniformly smooth Banach space and let $C$ be a nonempty, closed, and convex subset of $X$. For any $y\in C$, and any $\psi\in X^*$ with $\psi\ne J_Xy$ such that $y=\pi_C(\psi)$, we define the inverse image of $y$ under the generalized projection $\pi_C$ in $X^*$ by
$$\pi_C^{-1}(y)=\{\phi\in X^*:\ \pi_C(\phi)=y\}.$$
Then $\pi_C^{-1}(y)$ is a $\|\cdot\|_{X^*}$-closed and convex cone with vertex at $J_Xy$ in $X^*$.
\end{thr}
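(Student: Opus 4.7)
The plan is to invoke the variational characterization \eqref{KL3-S2.4-GP-E2.5} of the generalized projection to rewrite $\pi_C^{-1}(y)$ as an intersection of closed affine half-spaces in $X^*$, and then exploit the fact that each defining inequality is affine in the unknown $\phi$.

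First, I would observe that by \eqref{KL3-S2.4-GP-E2.5}, a functional $\phi \in X^*$ belongs to $\pi_C^{-1}(y)$ precisely when $\langle \phi - J_X y,\, y - z \rangle \geq 0$ for every $z\in C$. Fixing any $z \in C$, the map $\phi \mapsto \langle \phi - J_X y,\, y - z\rangle$ is continuous and affine in $\phi$, so the set
$$H_z \;=\; \{\phi \in X^* : \langle \phi - J_X y,\, y - z \rangle \geq 0\}$$
is a $\|\cdot\|_{X^*}$-closed half-space containing $J_X y$, hence a closed convex cone with vertex at $J_X y$. Since $\pi_C^{-1}(y) = \bigcap_{z\in C} H_z$, the properties of being closed and convex are inherited immediately from the intersection.

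To pin down the vertex and check the cone property directly, I would verify that whenever $\phi \in \pi_C^{-1}(y)$ and $t \geq 0$, the point $J_X y + t(\phi - J_X y)$ again satisfies the defining inequality: for every $z \in C$,
$$\langle J_X y + t(\phi - J_X y) - J_X y,\, y-z \rangle \;=\; t\,\langle \phi - J_X y,\, y-z \rangle \;\geq\; 0,$$
so the point lies in $\pi_C^{-1}(y)$. Combined with convexity, this yields the desired closed convex cone structure with vertex at $J_X y$; note that $J_X y$ itself belongs to $\pi_C^{-1}(y)$ since $y \in C$ renders the inequality trivial.

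The reason this argument is substantially cleaner than in Theorem~\ref{KL3-S3.1-T3.1} is the key structural observation that the nonlinear duality map $J_X$ in \eqref{KL3-S2.4-GP-E2.5} is applied only to the fixed point $y$, while the unknown $\phi$ enters linearly. Consequently there is no analogue of the non-convexity counterexample of Proposition~\ref{KL3-S2.2-ND-P2.2} to worry about, and no serious obstacle stands in the way: the whole argument reduces to noting that an intersection of closed half-spaces sharing the common point $J_X y$ is a closed convex cone with that point as its vertex.
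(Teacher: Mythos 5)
Your proof is correct and rests on exactly the same ingredient as the paper's: the variational characterization \eqref{KL3-S2.4-GP-E2.5} together with the observation that the unknown $\phi$ enters the inequality affinely. The paper verifies convexity, the cone property, and closedness element-by-element (taking a norm-convergent sequence for the last step), whereas you package the first and third as an intersection of closed half-spaces $\bigcap_{z\in C} H_z$; this is a tidier organization of the same argument, not a different one.
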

\begin{proof} Let $\psi,\phi\in \pi_C^{-1}(y)$ and $\alpha\in [0,1]$ be arbitrary. Then
$$\langle (\alpha \psi+(1-\alpha)\phi)-J_Xy,y-z\rangle= \alpha \langle \psi-J_Xy,y-z\rangle+(1-\alpha)\langle \phi-J_Xy,y-z\rangle\geq 0,\quad \text{for all}\ z\in C,$$
which due to the variational characterization \eqref{KL3-S2.4-GP-E2.5} implies that $\pi_{C}(\alpha \psi+(1-\alpha)\phi)=y$ and hence $(\alpha \psi+(1-\alpha)\phi)\in \pi_C^{-1}(y)$. Thus, proving the convexity of $\pi_C^{-1}(y)$.

Moreover, since for any $\psi\in \pi_C^{-1}(y)$ and for any $t\geq 0,$ we have
$$\langle (J_Xy+t(\psi-J_Xy))-J_Xy,y-z\rangle=t\langle \psi-J_Xy,y-z\rangle\geq 0, \quad \text{for all}\ z\in C,$$
by appealing to \eqref{KL3-S2.4-GP-E2.5} once again, we obtain $\pi_C(J_Xy+t(\psi-J_Xy))=y$, and hence $J_Xy+t(\psi-J_Xy)\in \pi_C^{-1}(y),$ proving $\pi_C^{-1}(y)$ is a cone with vertex at $J_Xy$ in $X^*$.

Finally, we prove that $\pi_C^{-1}(y)$ is $\|\cdot\|_{X^*}$-closed in $X^*$. Let $\{\psi_n\}\subset \pi_C^{-1}(y)$ be a sequence converging to $\psi.$ We note that for an arbitrary and fixed $z\in C$ and for a fixed $y$, we have
$$|\langle \psi-J_Xy,y-z\rangle-\langle \psi_n-J_Xy,y-z\rangle |\leq \|\psi-\psi_n\|_{X^*}\|y-z\|_X\to 0\ \text{as}\ n\to \infty,$$
and since $\langle \psi_n-J_Xy,y-z\rangle\geq 0 $ for $z\in C$, we infer that
$$\langle \psi-J_Xy,y-z\rangle\geq 0,\quad \text{for any}\ z\in C,$$
which implies that $\pi_C(\psi)=y$  and hence $\psi\in \pi_C^{-1}(y)$, proving that $\pi_C^{-1}(y)$ is indeed closed.
\end{proof}

If $X$ is a uniformly convex and uniformly smooth Banach space and $C\subset X$ is nonempty, closed, and convex, then the mapping $\pi_C:X^*\to C$ is continuous. We have the following stronger result:
\begin{thr}\label{KL3-S3.1-T3.4}  Let $X$ be a uniformly convex and uniformly smooth Banach space and let $C$ be a nonempty, closed, and convex subset of $X$. Let $\{\psi_n\}\subset X^*$, $\psi\in X^*$, and $y\in C$. Assume that the following conditions are satisfied:
\begin{description}
\item[(a)] $\psi_n \rightharpoonup \psi$ weak$^*$ as $n\to \infty$.
\item[(b)] $\{\psi_n\}$ is $\|\cdot\|_{X^*}$ bounded.
\item[(c)] $\displaystyle  \lim_{n\to \infty}\pi_C(\psi_n)=y.$
\end{description}
Then $y=\pi_C(\psi).$
\end{thr}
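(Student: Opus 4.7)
The plan is to exploit the variational characterization \eqref{KL3-S2.4-GP-E2.5} of the generalized projection and pass to the limit. Writing $y_n := \pi_C(\psi_n)$, the defining inequality gives
$$\langle \psi_n - J_X y_n,\, y_n - z \rangle \geq 0, \quad \text{for all } z \in C \text{ and all } n.$$
Since $y \in C$ (because $\{y_n\} \subset C$ converges in norm to $y$ and $C$ is closed), it suffices to show that, for every fixed $z \in C$, the left-hand side converges to $\langle \psi - J_X y,\, y - z\rangle$; then another application of \eqref{KL3-S2.4-GP-E2.5} will yield $y = \pi_C(\psi)$.

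To justify the passage to the limit, I would split the inner product into two pieces, $\langle \psi_n,\, y_n - z\rangle - \langle J_X y_n,\, y_n - z\rangle$, and handle them separately. For the second piece, norm continuity of $J_X$ (property $(J_1)$ of Lemma~\ref{KL3-S2.2-L2.1}) combined with $y_n \to y$ in norm gives $J_X y_n \to J_X y$ in $\|\cdot\|_{X^*}$, so the product $\langle J_X y_n,\, y_n - z\rangle \to \langle J_X y,\, y - z\rangle$ follows at once from the bilinearity of the duality pairing. For the first piece, I would further decompose
$$\langle \psi_n,\, y_n - z\rangle = \langle \psi_n,\, y_n - y\rangle + \langle \psi_n,\, y - z\rangle.$$
The second summand converges to $\langle \psi,\, y - z\rangle$ by the weak$^*$ convergence hypothesis (a). The first summand is bounded in absolute value by $\|\psi_n\|_{X^*}\,\|y_n - y\|_X$, which tends to $0$ by combining the uniform bound (b) with the norm convergence (c).

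Collecting these limits in the displayed inequality gives $\langle \psi - J_X y,\, y - z\rangle \geq 0$ for every $z \in C$, which is exactly the variational characterization of $y = \pi_C(\psi)$. The main subtlety, and the reason all three hypotheses are genuinely needed, lies in the hybrid nature of the convergence: the functionals $\psi_n$ converge only in the weak$^*$ sense while the arguments $y_n$ vary with $n$, so one cannot apply weak$^*$ convergence directly to $\langle \psi_n, y_n - z\rangle$. Hypothesis (b) is precisely what allows the ``mixed'' term $\langle \psi_n, y_n - y\rangle$ to be controlled via Cauchy–Schwarz-type estimates, bridging the gap between the weak$^*$ convergence of $\{\psi_n\}$ and the norm convergence of $\{y_n\}$.
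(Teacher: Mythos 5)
Your proof is correct and follows essentially the same route as the paper's: pass to the limit in the variational inequality \eqref{KL3-S2.4-GP-E2.5}, controlling the term with the moving argument $\pi_C(\psi_n)-y$ via the norm bound from (b)--(c) and the continuity of $J_X$, and handling the fixed vector $y-z$ via the weak$^*$ convergence (a). The only difference is a slightly different grouping of the terms in the decomposition, which does not change the substance of the argument.
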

\begin{proof} By the variational characterization \eqref{KL3-S2.4-GP-E2.5}, for each $n\in \mathds{N}$, we have
\begin{equation}\label{KL3-S2.1-NDM-E3.3}
\langle \psi_n-J_X(\pi_C(\psi_n)),\pi_C(\psi_n)-z\rangle \geq 0.
\end{equation}
For any arbitrary $z\in C$, we have
\begin{align}
|\langle &\psi_n-J_X(\pi_C(\psi_n)),\pi_C(\psi_n)-z\rangle-\langle \psi-J_X(y),y-z\rangle|\notag\\
&\leq |\langle \psi_n-J_X(\pi_C(\psi_n)),\pi_C(\psi_n)-z\rangle-\langle \psi_n-J_X(\pi_C\psi_n),y-z\rangle|\notag\\
&+|\langle \psi_n-J_X(\pi_C(\psi_n)),y-z\rangle-\langle \psi-J_X(y),y-z\rangle|\notag\\
&=|\langle \psi_n-J_X(\pi_C(\psi_n)),\pi_C(\psi_n)-y\rangle |+|+\langle \psi_n-J_X(\pi_C(\psi_n)-(\psi-J_X(y))),y-z\rangle|\notag\\
&\leq \|\psi_n-J_X(\pi_C(\psi_n))\|_{X^*}\|\pi_C(\psi_n)-y \|_X+|+\langle \psi_n-J_X(\pi_C(\psi_n)-(\psi-J_X(y))),y-z\rangle|.\label{KL3-S2.1-NDM-E3.4}
\end{align}
The imposed conditions imply that
$\|\psi_n-J_X(\pi_C(\psi_n))\|_{X^*}\|\pi_C(\psi_n)-y \|_X\to 0$
and by the continuity of $J_X$, we have $\lim_{n\to \infty} J_X(\pi_C(\psi_n))=J_Xy,$ and hence
\begin{equation}\label{KL3-S2.1-NDM-E3.5}
(\psi_n-J_X(\pi_C(\psi_n)))\rightharpoonup (\psi-J_Xy),\quad \text{weak}^*,\ \text{as}\ n\to \infty.
\end{equation}
Then, for any fixed $z\in C,$ we have
\begin{equation}\label{KL3-S2.1-NDM-E3.6}
\langle \psi_n-J_X(\pi_C(\psi_n))-(\psi-J_Xy),y-z\rangle \to 0.
\end{equation}
Combining \eqref{KL3-S2.1-NDM-E3.3}, \eqref{KL3-S2.1-NDM-E3.4}, \eqref{KL3-S2.1-NDM-E3.5}, and \eqref{KL3-S2.1-NDM-E3.6} it follows that
$$|\langle \psi_n-J_X(\pi_C(\psi_n)-(\psi-J_X(y)))),y-z\rangle|\to 0,$$
and hence
$$\langle \psi-J_Xy,y-z\rangle\geq 0 ,\quad \text{for arbitrary} \ z\in C.$$
It follows by the variational characterization \eqref{KL3-S2.4-GP-E2.5} that $y=\pi_C(\psi).$
\end{proof}
\subsection{Approximating Properties of the Generalized Metric projection}\label{KL3-S3.3 APGMP}
\begin{thr}\label{KL3-S3.1-T3.5} Let $X$ be a uniformly convex and uniformly smooth Banach space and let $C$ be a nonempty, closed, and convex subset of $X$. For any $y\in C$, and $x\in X$ with $x\ne y$ and $y=\Pi_C(x)$, we define the inverse image of $y$ under the generalized metric projection $\Pi_C$ in $X$ by
$$\Pi_C^{-1}(y)=\{u\in X|\ \Pi_Cu=y\}.$$
Then, we have
$$\Pi_C^{-1}(y)=J_{X^*}(\pi_C^{-1}y).$$
Furthermore, in general, $\Pi_C^{-1}(y)$ is nether a convex set nor a cone.
\end{thr}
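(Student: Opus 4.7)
The plan is to first establish the identity $\Pi_C^{-1}(y) = J_{X^*}(\pi_C^{-1}(y))$ and then exploit it, in conjunction with Theorem~\ref{KL3-S3.1-T3.3} and Corollary~\ref{KL3-C2.8}, to conclude failure of both the cone and convexity properties.

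For the identity, I would argue set-theoretically. Directly from the definition $\Pi_C u = \pi_C(J_X u)$, an element $u \in X$ lies in $\Pi_C^{-1}(y)$ if and only if $\pi_C(J_X u) = y$, that is, $J_X u \in \pi_C^{-1}(y)$. Since $X$ is uniformly convex and uniformly smooth, property $(J_1)$ in Lemma~\ref{KL3-S2.2-L2.1} guarantees that $J_X : X \to X^*$ is a bijection whose inverse is $J_{X^*}$, so the condition $J_X u \in \pi_C^{-1}(y)$ is equivalent to $u \in J_{X^*}(\pi_C^{-1}(y))$, giving the required set equality.

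For the failure of the cone property, I would invoke Theorem~\ref{KL3-S3.1-T3.3}: $\pi_C^{-1}(y)$ is a $\|\cdot\|_{X^*}$-closed convex cone in $X^*$ with vertex at $J_X y$. Since the hypothesis $x \ne y$ together with $y = \Pi_C x$ makes the case $J_X y \ne \theta^*$ the generic one, Corollary~\ref{KL3-C2.8}(ii) applied to the duality map $J_{X^*}$ of the uniformly convex and uniformly smooth space $X^*$ forces $J_{X^*}(\pi_C^{-1}(y))$ to fail to be a cone in $X$. This immediately yields the ``not a cone'' part of the conclusion, with the caveat that a concrete instance (e.g., $y \ne \theta$) must be exhibited to rule out degeneracies.

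For the failure of convexity, I would construct an explicit counterexample in $(\mathds{R}^3, \|\cdot\|_3)$, reusing the arithmetic developed in Proposition~\ref{KL3-S2.2-ND-P2.2}, Corollary~\ref{KL3-C2.8}, and Theorem~\ref{KL3-S3.1-T3.1}. Taking $y = (25,37,77)$ and $C = \{ty : t \in [0,1]\}$, I would pick two functionals $\psi_1, \psi_2 \in \pi_C^{-1}(y) \subset X^*$ and set $u_i = J_{X^*}\psi_i$, so that $u_i \in \Pi_C^{-1}(y)$ by the identity already proved; a convex combination $u = \alpha u_1 + (1-\alpha)u_2$ is then tested against the variational characterization \eqref{KL3-S2.4-GP-E2.5-N}. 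The main obstacle is that $J_{X^*}$ does \emph{not} respect convex combinations, so the linear computation in $X^*$ does not transfer back to $X$; instead one must compute $J_X u$ at the midpoint and verify $\langle J_X u - J_X y,\, y - z\rangle < 0$ for some $z \in C$, a calculation patterned on $\langle J_X g, y\rangle = -14\sqrt[3]{4}$ from Proposition~\ref{KL3-S2.2-ND-P2.2}. Once such a $u$ is exhibited, \eqref{KL3-S2.4-GP-E2.5-N} shows $\Pi_C u \ne y$, i.e., $u \notin \Pi_C^{-1}(y)$, which completes the proof.
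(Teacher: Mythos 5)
Your derivation of the identity $\Pi_C^{-1}(y)=J_{X^*}(\pi_C^{-1}(y))$ is correct and is essentially the paper's own argument. The two negative claims, however, are where the substance of the theorem lies, and both are left with genuine gaps. For the cone claim you lean on Corollary~\ref{KL3-C2.8}(ii) applied to $J_{X^*}$. That corollary is given in the paper without a written proof, and as literally stated it is not reliable: by positive homogeneity of the duality map, the ray $K=\{tv:\ t\geq 1\}$ has vertex $v\ne\theta$ and yet $JK=\{tJv:\ t\geq 1\}$ is again a ray, hence a cone. Even under a corrected version you would still have to verify that $\pi_C^{-1}(y)$ is not in such a degenerate position (you flag this yourself but do not carry it out), and you would have to specify with respect to which vertex $\Pi_C^{-1}(y)$ fails to be a cone. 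The paper sidesteps all of this with a direct computation: for $y=\left(\tfrac{1}{\sqrt[3]{3}},\tfrac{1}{\sqrt[3]{3}},\tfrac{1}{\sqrt[3]{3}}\right)$, $C=\{ty:\ t\in[0,1]\}$, and $u=\left(\tfrac{2}{\sqrt[3]{3}},-\tfrac{1}{\sqrt[3]{3}},\tfrac{1}{\sqrt[3]{3}}\right)$, one checks $\langle J_Xu,y\rangle>1=\langle J_Xy,y\rangle$, so $u\in\Pi_C^{-1}(y)$, while $g=y+\tfrac12(u-y)$ satisfies $\langle J_Xg,y\rangle<1$ and therefore lies outside; hence there is no cone with vertex $y$.

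For the convexity claim your outline is right, but the decisive computation is absent, and the arithmetic you propose to recycle does not fit. Via \eqref{KL3-S2.4-GP-E2.5} with $\psi=J_Xu$, membership $u\in\Pi_C^{-1}(y)$ for $C=\{ty:\ t\in[0,1]\}$ amounts to $\langle J_Xu-J_Xy,y\rangle\geq 0$, i.e.\ $\langle J_Xu,y\rangle\geq\|y\|_X^2$; this involves the \emph{difference} $J_Xu-J_Xy$, not $J_X(u-y)$ as in Theorem~\ref{KL3-S3.1-T3.1}. With $y=(25,37,77)$ and the points $v,w$ of Proposition~\ref{KL3-S2.2-ND-P2.2} one has $\langle J_Xv,y\rangle=0<\|y\|_3^2$, so those points do not even belong to $\Pi_C^{-1}(y)$, and the value $-14\sqrt[3]{4}$ is irrelevant to the inequality that must be violated here. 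A fresh numerical example is required; the paper takes $y$ on the unit sphere, $v=(1.66,1,-1)$, $w=(-1,1,1.66)$, verifies $\langle J_Xv,y\rangle>1$ and $\langle J_Xw,y\rangle>1$, and shows that the midpoint $h=(0.33,1,0.33)$ gives $\langle J_Xh,y\rangle<1$. Until you exhibit and check such points, the ``not convex'' half of the theorem remains unproved.
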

\begin{proof} Suppose $y\in C$ and $x\in X$ with $x\ne y$ such that $y=\Pi_C(x)$. By the definition of $\Pi_C:X\to C$, we have
$$\pi_C(J_X x)=\Pi_C x=y.$$
Since $J_X$ is a one-to-one, onto, and  single-valued, from $x\ne y$, it follows that $J_Xx\ne J_Xy$. Recall that $\pi_C^{-1}y$ is a closed and convex cone with vertex at $J_Xy$ in $X^*$. Notice that for any $\psi\in X^*$, we have $\pi_C\psi=\Pi_C(J_{X^*}\psi)$. It follows that
$$\Pi_C^{-1}y=\{x\in X: \ \Pi_Cx=y\}=\{J_{X^*}\psi \in X: \psi \in X^*\ \text{with}\ \Pi_C(J_{X^*}\psi)=\pi_C\psi=y\}=J_{X^*}(\pi_C^{-1}y).$$
For any $y\in C$ and $x\in X$ with $x\ne y$, we have $y=\Pi_C x$, if and only if, $y=\pi_C(J_Xx)$. That is, $x\in \Pi_C^{-1}y$, if and only if, $J_Xx=\pi_C^{-1}y.$

Since $J_{X^*}=J_X^{-1},$ it follows that
$$\pi_C^{-1}(y)=J_X(\Pi_C^{-1}y),\quad \text{and}\ \Pi_C^{-1}y=J_{X^*}(\pi_C^{-1}y).$$
Following Theorem~\ref{KL3-S3.1-T3.3}, the proof of the first claim is complete.

We construct a counter-example to show that $\Pi_C^{-1}(y)$ is not convex. Let $X=\mathds{R}^3$ be the uniformly convex and uniformly smooth Banach space equipped with the $\|\cdot \|_3$ norm. We take $y=\left(\frac{1}{\sqrt[3]{3}},\frac{1}{\sqrt[3]{3}},\frac{1}{\sqrt[3]{3}}\right)\in X.$ Then $\|y\|_3=1$, and hence $\langle J_Xy,y\rangle=1. $ As before, we define a convex subset $C$ by $C=\{ty\in X:\ t\in [0,1]\}$.

Let $v=(1.66, 1,-1)$, $w=(-1,1,1.66)$. We calculate
$$J_Xv=\frac{(1.66^2,1,-1)}{\sqrt[3]{1.66^3+1+1}}=\frac{(2.7556,1,-1)}{\sqrt[3]{6.574296}},$$
which gives $\langle J_xv,y\rangle >1.$

Then, using $\langle J_xv,y\rangle >1$ and $\langle J_xy,y\rangle =1,$ we have $\langle J_Xv-J_Xy,y\rangle>0 $, which implies that
$$\langle J_Xv-J_Xy,y-ty\rangle=(1-t)\langle J_Xv-J_Xy,y\rangle\geq 0,\quad \text{for any}\ ty\in C,\ t\in [0,1].  $$
The above inequality, due to \eqref{KL3-S2.4-GP-E2.5} implies that $v\in \Pi_C^{-1}(y)$. Analogously, $w\in \Pi_C^{-1}(y).$

We take $h=\frac12v+\frac12w=(0.33,1,0.33)$. Since,
$$J_hh=\frac{(0.33^2,1,0.33^2)}{\sqrt[3]{0.33^3+1+0.33^3}},$$
we obtain
\begin{equation}\label{KL3-S2.1-NDM-E3.10}
\langle J_Xh,y\rangle=\left\langle \frac{(0.33^2,1,0.33^2)}{\sqrt[3]{1.071874}},\left(\frac{1}{\sqrt[3]{3}},\frac{1}{\sqrt[3]{3}},\frac{1}{\sqrt[3]{3}}\right) \right\rangle=\frac{1.2178}{\sqrt[3]{3.215622}}<1.
\end{equation}
By the above equation and $\langle J_xy,y\rangle =1,$ we have
\begin{equation}\label{KL3-S2.1-NDM-E3.11}
\langle J_Xh-J_Xy,y\rangle=\frac{1.2178}{\sqrt[3]{3.215622}}-1<0.
\end{equation}
Furthermore, the above estimates, for any $ty\in C$ with $t\in [0,1)$ yields
$$\langle J_Xh-J_Xy,y-ty\rangle=(1-t)\langle J_Xh,y\rangle=(1-t)\left(\frac{1.2178}{\sqrt[3]{3.215622}}-1\right)<0.$$
Using \eqref{KL3-S2.4-GP-E2.5}, we deduce that $\Pi_Ch\ne y,$ that is, $h\notin \Pi_C^{-1}(y)$, and hence $\Pi_C^{-1}(y)$it is not convex.

Finally, since $y\in \Pi_C^{-1}(y)\cap C$, it suffices to prove that $\Pi_C^{-1}(y)$ is not a cone with vertex $y$.

We take
$$y=\left(\frac{1}{\sqrt[3]{3}},\frac{1}{\sqrt[3]{3}},\frac{1}{\sqrt[3]{3}}\right)\in X$$ and $C=\left\{ty\in X:\ t\in [0,1]\right\}$. Let
$$u=\left(\frac{2}{\sqrt[3]{3}},-\frac{1}{\sqrt[3]{3}},\frac{1}{\sqrt[3]{3}}\right)\in X.$$
Then
\begin{equation}\label{KL3-S2.1-NDM-E3.12}
\langle J_Xu,y\rangle= \frac{\frac43}{\sqrt[3]{\frac{10}{3}}} >1.
\end{equation}
Furthermore,
$$\langle J_Xu-J_Xy,y\rangle=\frac{\frac43}{\sqrt[3]{\frac{10}{3}}}-1>0,$$
which implies that
\begin{equation}\label{KL3-S2.1-NDM-E3.13}
\langle J_Xu-J_Xy,y-ty\rangle =(1-t)\langle J_Xu-J_Xy,y\rangle\geq 0,\quad \text{for any}\ ty\in C,\ \text{where}\ t\in [0,1].
\end{equation}
The above inequality, due to \eqref{KL3-S2.4-GP-E2.5} implies that $u\in \Pi_C^{-1}(y)$. Now, let
$$g=\frac12(u-y)+y=\frac12 u+\frac12 y=\left(\frac{3}{2\sqrt[3]{3}},0,\frac{1}{\sqrt[3]{3}}\right),$$
which results in
\begin{equation}\label{KL3-S2.1-NDM-E3.14}
\langle J_Xg,y\rangle =\frac{\frac{13}{6}}{\sqrt[3]{\frac{35}{3}}}<1.
\end{equation}
Then,
$$\langle J_Xg-J_Xy,y\rangle <0.$$
Combining the above equations, we have
$$\langle J_Xg-J_Xy,y-ty \rangle=(1-t)\langle J_Xg-J_Xy,y\rangle<0,\quad \text{for any}\ ty\in C,\ t\in [0,1).$$
By \eqref{KL3-S2.4-GP-E2.5}, we have $g\not\in \Pi_C^{-1}(y)$. This proves that $\Pi_C^{-1}(y)$ is not a cone with vertex at $y$.
\end{proof}

If $X$ is a uniformly convex and uniformly smooth Banach space and $C\subset X$ is nonempty, closed, and convex, then the mapping $\Pi_C:X\to C$ is continuous. Next, we prove a stronger result.
\begin{thr}\label{KL3-S3.1-T3.6} Let $X$ be a uniformly convex and uniformly smooth Banach space and let $C$ be a nonempty, closed, and convex subset of $X$. Let $\{x_n\}\subset X$, $x\in X$, and $y\in C$. Assume that the following conditions are satisfied:
\begin{description}
\item[(a)] $J_Xx_n \rightharpoonup J_X x,$ weakly $^*$, as $n\to \infty.$
\item[(b)] $\{x_n\}$ is $\|\cdot\|_{X}$ bounded.
\item[(c)] $\displaystyle \lim_{n\to \infty}\Pi_C(x_n)=y.$
\end{description}
Then $y=\Pi_C(x).$
\end{thr}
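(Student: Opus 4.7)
The plan is to reduce Theorem~\ref{KL3-S3.1-T3.6} directly to Theorem~\ref{KL3-S3.1-T3.4} by exploiting the structural identity $\Pi_C = \pi_C \circ J_X$ that comes from the very definition of the generalized metric projection. Set $\psi_n := J_X x_n$ and $\psi := J_X x$; the strategy is then to verify that the triple $(\{\psi_n\}, \psi, y)$ satisfies each of the three hypotheses of Theorem~\ref{KL3-S3.1-T3.4} and read off the conclusion.

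First I would observe that hypothesis (a) of the present theorem is literally hypothesis (a) of Theorem~\ref{KL3-S3.1-T3.4} once one writes $\psi_n = J_Xx_n$ and $\psi = J_Xx$. Next, to verify the norm-boundedness requirement (b) of Theorem~\ref{KL3-S3.1-T3.4}, I would invoke the defining identity $\|J_Xz\|_{X^*} = \|z\|_X$ for the normalized duality map, so that $\|\psi_n\|_{X^*} = \|x_n\|_X$, and the boundedness of $\{x_n\}$ in $X$ transfers immediately to boundedness of $\{\psi_n\}$ in $X^*$. Finally, since by definition $\pi_C(\psi_n) = \pi_C(J_Xx_n) = \Pi_C(x_n)$, hypothesis (c) of the present theorem reads $\pi_C(\psi_n) \to y$, which is hypothesis (c) of Theorem~\ref{KL3-S3.1-T3.4}.

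Having verified all three hypotheses, I would apply Theorem~\ref{KL3-S3.1-T3.4} to conclude $\pi_C(\psi) = y$, i.e., $\pi_C(J_X x) = y$. Using the defining identity $\Pi_C x = \pi_C(J_Xx)$ once more yields $\Pi_C(x) = y$, which is the desired conclusion.

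There is essentially no obstacle beyond recognizing that the definition $\Pi_C = \pi_C \circ J_X$ transports every hypothesis from $X$-side to $X^*$-side verbatim, the only nontrivial transfer being the norm-boundedness, which is handled by the isometry-like property $\|J_Xz\|_{X^*} = \|z\|_X$ of the normalized duality map. It is worth noting that the hypothesis in (a) is already phrased in terms of $J_Xx_n$ (rather than $x_n$ itself), which is precisely the formulation needed for the reduction to Theorem~\ref{KL3-S3.1-T3.4} to be immediate; a hypothesis of the form $x_n \rightharpoonup x$ in $X$ would in general not suffice, because the weak continuity of $J_X$ is a delicate matter in Banach spaces.
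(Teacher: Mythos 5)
Your proposal is correct and takes essentially the same route as the paper: the paper's own proof simply re-runs the limit argument of Theorem~\ref{KL3-S3.1-T3.4} with $\psi_n = J_Xx_n$ (writing the variational inequality $\langle J_Xx_n - J_X(\Pi_Cx_n), \Pi_Cx_n - z\rangle \geq 0$ and passing to the limit ``as above''), whereas you package the identical reduction as a direct application of that theorem via $\Pi_C = \pi_C \circ J_X$ and the isometry $\|J_Xz\|_{X^*} = \|z\|_X$. Your version is, if anything, a cleaner formalization of what the paper leaves implicit.
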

\begin{proof} By \eqref{KL3-S2.4-GP-E2.5}, we have
$$\langle J_Xx_n-J_X(\Pi_C(x_n)),\Pi_C(x_n)-z\rangle\geq 0,\quad \text{for all}\ z\in C. $$
Similar to the arguments used to above, for any $z\in C,$ we obtain
$$\langle J_Xx-J_Xy,y-z\rangle \geq 0,$$
which implies that  $y=\Pi_C(x).$
\end{proof}

We recall that we denote the modulus of convexity and the modulus of smoothness of a Banach space  $X$ by $\delta_X$ and $\rho_X$.
\begin{thr}\label{KL3-S3.1-T3.8} Let $X$ be a uniformly convex and uniformly smooth Banach space and let $C$ be a nonempty, closed, and convex subset of $X$. Let $\{x_n\}\subset X$ and let $x\in X.$ Let $R=\max\{\|x\|_X,\|\Pi_Cx\|_X\}$. Then there is a number $K_R\in (0,1]$ such that
$$x_n \rightharpoonup x,\quad \Rightarrow K_R\|x-\Pi_Cx\|_X\leq \liminf_{n\to \infty}\|x_n-\Pi_Cx_n\|_X.$$
\end{thr}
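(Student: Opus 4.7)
The plan is to mimic the proof of Theorem~\ref{KL3-S3.1-T3.2}, substituting the variational characterization \eqref{KL3-S2.4-GP-E2.5-N} of $\Pi_C$ for the one used there, and paying for the lack of the identity $\|J_X(x-P_Cx)\|_{X^*}=\|x-P_Cx\|_X$ by invoking the quantitative estimates in property $(J_3)$ of Lemma~\ref{KL3-S2.2-L2.1}.

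If $x\in C$ then $\Pi_Cx=x$ and the claim is trivial, so assume $x\notin C$, in which case $x\ne\Pi_Cx$ and hence $J_Xx\ne J_X\Pi_Cx$. Applying \eqref{KL3-S2.4-GP-E2.5-N} at $x$ with the test point $z=\Pi_Cx_n\in C$ yields
\[
\langle J_Xx-J_X\Pi_Cx,\,\Pi_Cx-\Pi_Cx_n\rangle \geq 0.
\]
The key algebraic step is the decomposition $\Pi_Cx-\Pi_Cx_n = -(x-\Pi_Cx) + (x-x_n) + (x_n-\Pi_Cx_n)$, which rearranges the above inequality to
\[
\langle J_Xx-J_X\Pi_Cx,\,x_n-\Pi_Cx_n\rangle \geq \langle J_Xx-J_X\Pi_Cx,\,x-\Pi_Cx\rangle + \langle J_Xx-J_X\Pi_Cx,\,x_n-x\rangle.
\]

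Next, I would bound the left-hand side above by $\|J_Xx-J_X\Pi_Cx\|_{X^*}\,\|x_n-\Pi_Cx_n\|_X$ via the duality pairing estimate, divide through by $\|J_Xx-J_X\Pi_Cx\|_{X^*}>0$, and take $\liminf_{n\to\infty}$. Since $x_n\rightharpoonup x$ and $J_Xx-J_X\Pi_Cx\in X^*$ is fixed, the cross term $\langle J_Xx-J_X\Pi_Cx,\,x_n-x\rangle$ vanishes in the limit, giving
\[
\liminf_{n\to\infty}\|x_n-\Pi_Cx_n\|_X \;\geq\; \frac{\langle J_Xx-J_X\Pi_Cx,\,x-\Pi_Cx\rangle}{\|J_Xx-J_X\Pi_Cx\|_{X^*}}.
\]

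Finally, I would apply both inequalities in $(J_3)$ of Lemma~\ref{KL3-S2.2-L2.1} with $R=\max\{\|x\|_X,\|\Pi_Cx\|_X\}$: the lower bound estimates the numerator from below by $\tfrac{R^2}{2\Gamma_X}\delta_X(\|x-\Pi_Cx\|_X/(2R))$, while the upper bound controls the denominator from above by $\tfrac{R^2}{2\Gamma_X\|x-\Pi_Cx\|_X}\sigma_X(16\Gamma_X\|x-\Pi_Cx\|_X/R)$. After cancellation of $R^2/(2\Gamma_X)$ and recovery of one factor of $\|x-\Pi_Cx\|_X$, this produces
\[
\liminf_{n\to\infty}\|x_n-\Pi_Cx_n\|_X \;\geq\; K_R\,\|x-\Pi_Cx\|_X, \qquad K_R := \frac{\delta_X\!\left(\dfrac{\|x-\Pi_Cx\|_X}{2R}\right)}{\sigma_X\!\left(\dfrac{16\Gamma_X\|x-\Pi_Cx\|_X}{R}\right)}.
\]
The main obstacle is justifying that $K_R\in(0,1]$: strict positivity of $K_R$ follows from strict positivity of $\delta_X$ on $(0,2]$ for uniformly convex $X$ together with the finiteness of $\sigma_X$, while its admissibility as a ``constant depending on $R$'' is justified by the a~priori bound $\|x-\Pi_Cx\|_X\leq\|x\|_X+\|\Pi_Cx\|_X\leq 2R$, which confines the arguments of $\delta_X$ and $\sigma_X$ to bounded ranges controlled solely by $R$ and $\Gamma_X$.
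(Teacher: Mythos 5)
Your proposal is correct and follows essentially the same route as the paper's own proof: the same application of \eqref{KL3-S2.4-GP-E2.5-N} with test point $\Pi_Cx_n$, the same rearrangement, and the same use of both estimates in $(J_3)$ to produce the constant $K_R=\delta_X\left(\frac{\|x-\Pi_Cx\|_X}{2R}\right)/\rho_X\left(\frac{16\Gamma_X\|x-\Pi_Cx\|_X}{R}\right)$; the only difference is that you divide by $\|J_Xx-J_X\Pi_Cx\|_{X^*}$ and pass to the $\liminf$ before invoking $(J_3)$, which is cosmetic. The worry you flag about $K_R\leq 1$ is harmless, since if the inequality holds with any constant exceeding $1$ it holds a fortiori with the constant $1$, so one may always replace $K_R$ by $\min\{K_R,1\}$.
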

\begin{proof} Since the proof is trivial for $x=\Pi_Cx$, we assume that $x\ne \Pi_Cx.$ For every $n\in \mathds{N}$, due to the variational characterization \eqref{KL3-S2.4-GP-E2.5}, we have
$$\langle J_Xx-J_X\Pi_Cx,\Pi_Cx-\Pi_Cx_n\rangle\geq 0, $$
which can be rearranged as follows
$$\langle J_Xx-J_X\Pi_Cx,x_n-\Pi_Cx_n\rangle\geq \langle J_Xx-J_X\Pi_Cx,x-\Pi_Cx\rangle+\langle J_Xx-J_X\Pi_Cx,x_n-x\rangle. $$
Let $R$ be as given and let $\Gamma_X\in (1,1.7)$ be the Figiel's constant. Then, we have
\begin{align*}
\langle J_Xx-J_X\Pi_Cx,x-\Pi_Cx\rangle &\geq \frac{R^2}{2\Gamma_X}\delta_x\left(\frac{\|x-\Pi_Cx\|_X}{2R}\right),\\
\|J_Xx-J_X\Pi_Cx\|_{X^*} &\leq \frac{R^2}{2\Gamma_X\|x-\Pi_Cx\|_X}\rho_X\left(\frac{16\Gamma_X\|x-\Pi_Cx\|_X}{R}\right).
\end{align*}
Then,
\begin{align*}
\frac{R^2}{2\Gamma_X\|x-\Pi_Cx\|_X}\rho_X&\left(\frac{16\Gamma_X\|x-\Pi_Cx\|_X}{R}\right) \|x_n-\Pi_Cx_n\|_X \\
&\geq \|J_Xx-J_X\Pi_Cx\|_{X^*}  \|x_n-\Pi_Cx_n\|_X\\
&\geq \langle J_Xx-J_X\Pi_Cx,x_n-\Pi_Cx_n\rangle \\
&\geq \langle J_Xx-J_X\Pi_Cx,x-\Pi_Cx\rangle +\langle J_X x-J_X \Pi_Cx,x_n-x\rangle\\
& \geq \frac{R^2}{2\Gamma_X}\delta_x\left(\frac{\|x-\Pi_Cx\|_X}{2R}\right)+ \langle J_X x-J_X \Pi_Cx,x_n-x\rangle,
\end{align*}
which can be written as
$$\|x_n-\Pi_Cx_n\|_X\geq \frac{\delta_X\left(\frac{\|x-\Pi_Cx\|}{2R}\right)}{\rho_X\left(\frac{16\Gamma_X\|x-\Pi_Cx\|_X}{R}\right)}\|x-\Pi_Cx\|_X+\frac{ \langle J_X x-J_X \Pi_Cx,x_n-x\rangle}{\frac{R^2}{2\Gamma_X\|x-\Pi_Cx\|_X}\rho_X\left(\frac{16\Gamma_X\|x-\Pi_Cx\|_X}{R}\right)}.$$
We define a positive number $K_R$ by
$$K_R:=\frac{\delta_X\left(\frac{\|x-\Pi_Cx\|}{2R}\right)}{\rho_X\left(\frac{16\Gamma_X\|x-\Pi_Cx\|_X}{R}\right)}.$$
Since $x\ne \Pi_Cx,$ we have
\begin{equation}\label{KL3-S2.1-NDM-E3.17}
\|x_n-\Pi_Cx_n\|_X\geq K_R\|x-\Pi_Cx\|+\frac{ \langle J_X x-J_X \Pi_Cx,x_n-x\rangle}{\frac{R^2}{2\Gamma_X\|x-\Pi_Cx\|_X}\rho_X\left(\frac{16\Gamma_X\|x-\Pi_Cx\|_X}{R}\right)}.
\end{equation}
For the fixed $J_Xx-J_X\Pi_Cx\in X^*,$ we have
\begin{equation}\label{KL3-S2.1-NDM-E3.18}
\langle J_X(x-P_Cx).x_n-x\rangle\to 0.
\end{equation}
However, since $K_R\leq 1,$ letting $\liminf$ in \eqref{KL3-S2.1-NDM-E3.17} and using \eqref{KL3-S2.1-NDM-E3.18}, we get the desired result.
\end{proof}

\bibliographystyle{spmpsci_unsrt}
\bibliography{BIB-PM}
\end{document}